\let\footnote=\endnote
\newtheorem{Theorem}{Theorem}[part]
\newtheorem{Definition}{Definition}[part]
\newtheorem{Proposition}{Proposition}[part]
\newtheorem{Lemma}{Lemma}[part]
\newtheorem{Remark}{Remark}[part]
\newtheorem{Example}{Example}[part]
\newcommand{\nc}{\newcommand}
\nc{\ind}{\mathds{1}}
\newcommand{\R}{\mathbb{R}}
\newcommand{\E}{\mathcal{E}}
\DeclareMathOperator{\esssup}{esssup}
\def\esssup_#1{\underset{#1}{\mathrm{ess\,sup\, }}}
\def\essinf_#1{\underset{#1}{\mathrm{ess\,inf\, }}}
\def\argmax_#1{\underset{#1}{\mathrm{arg\,max\, }}}
\def\argmin_#1{\underset{#1}{\mathrm{arg\,min\, }}}
\def\reff#1{{\rm(\ref{#1})}}
\def \ep{\hbox{ }\hfill$\Box$}
\def \Min{\displaystyle\min}
\def\b1{\bf 1}
\def \A{\mathbb{A}}
\def \N{\mathbb{N}}
\def \R{\mathbb{R}}
\def \E{\mathbb{E}}
\def \P{\mathbb{P}}
\def \A{{\cal A}}
\def \Bc{{\cal B}}
\def \Fc{{\cal F}}
\def \Hc{{\cal H}}
\def \Pc{{\cal P}}
\def \Mc{{\cal M}}
\def \Sc{{\cal S}}
\def \Wc{{\cal W}}
\def \ep{\hbox{ }\hfill$\Box$}
\numberwithin{equation}{section}
\newenvironment{proof}[1][{\it Proof.}]{\begin{trivlist}
\item[\hskip \labelsep {\bfseries #1}]}{ \hfill
$\Box$\end{trivlist}\vskip -0.2 cm}
\def\reff#1{{\rm(\ref{#1})}}
\def\beqs{\begin{eqnarray*}}
\def\enqs{\end{eqnarray*}}
\def\beq{\begin{eqnarray}}
\def\enq{\end{eqnarray}}
\begin{document}
\title{Dynamic Programming Principles for Mean-Field Controls with Learning}

\author{Haotian Gu
\thanks{Department of Mathematics, University of California, Berkeley, USA. \textbf{Email:} haotian$\_$gu@berkeley.edu }
\and
Xin Guo
\thanks{Department of Industrial Engineering \& Operations Research, University of California, Berkeley, USA. \textbf{Email:} xinguo@berkeley.edu}
\and
Xiaoli Wei
\thanks{Tsinghua-Berkeley Shenzhen Institute, Shenzhen, China. \textbf{Email:} xiaoli\_wei@sz.tsinghua.edu}
\and
Renyuan Xu
\thanks{Industrial  \& Systems Engineering, University of Southern California, Los Angeles, USA. \textbf{Email:} renyuanx@usc.edu}
}

%\KEYWORDS{butter, margarine, silliness}

%\HISTORY{This paper was first submitted on April 12, 1922 and has been with the authors for 83 years for 65 revisions.}

\maketitle

\begin{abstract}
Dynamic programming principle (DPP) is fundamental for control and optimization, including Markov decision problems (MDPs), reinforcement learning (RL), and more recently mean-field controls (MFCs). 
However, in the learning framework of MFCs, DPP has not been rigorously established, despite its critical importance for algorithm designs.
In this paper, we first present a simple example in MFCs with learning where DPP fails with a mis-specified Q function; and then propose the correct form of Q function in an appropriate space
for MFCs with learning. This particular form of Q function is different from the classical one and is called the IQ function. In the special case when the transition probability and the reward are independent of the mean-field information,  it {\it integrates} the classical Q function for single-agent RL over the state-action distribution. In other words, MFCs with learning can be viewed as lifting the classical RLs by replacing the
state-action space with its probability distribution space. This identification of  the IQ function enables us to establish precisely the DPP  in  the learning framework of MFCs. Finally, we illustrate through numerical experiments the time consistency of this IQ function.
\end{abstract}

\section{Introduction}

\paragraph{DPP.}

Widely regarded as one of the fundamental principles for control and optimization, dynamic programming principle (DPP) was first established for value functions of Markov decision problems (MDPs) in \cite{bellman1957markovian}, and later for more general frameworks in \cite{bertsekas1978stochastic} and \cite{fleming2006controlled}. DPP was also established for the Q functions in a learning framework of MDP in \cite{WATKINS1989}
(see \cite{bertsekas1996neuro} and \cite{sutton2018reinforcement}).
 The DPP implies the time consistency property of the optimal control in that a current optimal policy remains so for the future. This time consistency is critical for reinforcement learning  (RL): for model-free learning,  time consistency of the Q function is the key apparatus for Q-learning algorithms (\cite{WD1992} and \cite{mnih2015human}) and for the actor-critic approach (\cite{KT2000} and \cite{lillicrap2015continuous}); for model-based learning, time consistency of the value function lays the foundation for value iteration and policy based algorithms (\cite{Doya2000,doya2002multiple}). 
  More recently, the time consistency property has been analyzed in a series of papers for mean-field controls (MFCs) also known as McKean-Vlasov (MKV) controls
  (\cite{LP2014}, \cite{PW2016}, and  \cite{DPT2019}), {without the context of learning.}
\paragraph{MKV controls/MFCs.} 
McKean-Vlasov (MKV) processes, first introduced and  studied in \cite{MCKEAN1969},  are stochastic processes governed by  stochastic differential equations whose coefficients  depend on distributions of the solutions. MKV controls concern optimal controls of  such systems where interchangeable agents interact through the distribution of their states {and actions}. As such, MKV controls are often called mean-field controls (MFCs). 

From the game theory perspective, 
MFCs are closely related  to  mean-field games (MFGs).
Both are stochastic games with infinite number of agents, with MFGs  the limiting regime of games under Nash equilibrium and  MFCs that of games by Pareto optimality. 
Theories of MFGs and MFCs have progressed rapidly and have been adopted  in a number of fields such as physics, economics, and data science. (See \cite{LL2007},
\cite{BFY2013, CD2018}, and \cite{HMC2006}). 
MFCs, in particular,  have been  broadly applied to model collective behaviors of stochastic systems with
 a large number of mutually interacting agents, including
 \cite{GPY2013} for systemic risk assessment,    \cite{NUNO2017} for a large benevolent planner such as the government or the central bank to control taxes or interest rates, and
\cite{ABP2017} for consumers to choose between new energy resources and traditional ones.

\paragraph{MFCs with learning and DPP.}
For many of the stochastic systems {with a large population of agents}, model parameters and dynamics  are in general unknown {\it a priori}  {\color{black}and learning algorithms are essential for the agents to improve their decisions {while} interacting with the (partially) unknown system and other agents. In this case, multi-agent reinforcement learning (MARL) has enjoyed substantial successes for analyzing the otherwise challenging games, including two-agent or two-team computer games \citep{SHMGSVSAPL2016,VBCMJCDHGP2019}, self-driving vehicles \citep{SSS2016}, real-time bidding games \citep{JSLGWZ2018}, ride-sharing \citep{LQJYWWWY2019}, and traffic routing \citep{EAA2013}.
 Despite its empirical success, MARL suffers from the curse of dimensionality known also as the {\it combinatorial nature} of MARL: its sample complexity by  existing algorithms for stochastic dynamics grows exponentially with respect to the number of agents $N$.
 In practice, this $N$ can be on the scale of thousands or more, for instance, in rider match-up for Uber-pool and network routing for Zoom.
%Examples  abound:  multi-agent online role-playing games (\cite{JKK2015}), high frequency trading (\cite{LM2019}), and the sharing economy (\cite{HSU2016}), to name a few.   
MFCs, on the other hand, provide good approximations to the  multi-agent system and address the curse of dimensionality suffered in most of the existing MARL algorithms. It is therefore natural meanwhile important} to consider the learning problem in  MFCs. 

{Despite its potential for improving existing MARL algorithms}, theory of MFCs with learning remains by and large undeveloped. Instead, almost all  works  (for example, \cite{CLT2019a}, \cite{CLT2019b} and \cite{WYW2019}) focus mainly on learning algorithms while assuming heuristically some forms of DPP. %{\color{red}without rigorous verifications}.
%For most  stochastic systems, model parameters and dynamics  are in general unknown {\it a priori}. Examples  abound:  multi-agent online role-playing games (\cite{JKK2015}), high frequency trading (\cite{LM2019}), and the sharing economy (\cite{HSU2016}), to name a few.   MFCs are no exception. It is therefore natural to consider the learning problem in  MFCs. 

%Yet, theory of MFCs with learning remains by and large undeveloped. Instead, almost all  works  for example, \cite{CLT2019a}, \cite{CLT2019b}, and \cite{WYW2019}), focus mainly on developing learning algorithms by assuming a priori some forms of DPP.

It is tempting to assume  DPP  given the similarity between MFCs and MDPs. 
Yet, MFCs are  fundamentally  different from MDPs:   MKV systems depend on  marginal distributions of both the state and the control. Consequently, MFCs  are inherently time inconsistent. For instance,
 it has been well recognized that DPP in general does not hold for the controlled MKV system  due to its non-Markovian nature (\cite{AD2011}, \cite{BDL2011}, and \cite{CD2015}).
Only recently, this time inconsistency issue for MFCs was resolved by appropriately enlarging state spaces, for example, in  \cite{LP2014} and \cite{PW2016}  for a finite time horizon and in \cite{DPT2019} for a more general framework. 
When MFC is coupled with learning,  it is unclear if, when, and how DPP will hold. This is the focus of this paper. 

\paragraph{Time consistency in MFCs with learning.}
In this paper, we will first present a simple example (Example \ref{wrong_Q} in Section \ref{wrong_Q})  to demonstrate the time inconsistency issue for  MFCs with learning.  This example shows that when the  Q function is mis-specified, Q table will converge to different values with different initial population distributions.  

We will then establish precisely the DPP by identifying a correct form of the Q function in an appropriate space. This particular form of the Q function reflects the essence of MFCs: MFC is equivalent to  an auxiliary control problem in which the  objective function depends on the cost functional of every agent for the purpose of social optimality. This control perspective enables us to specify the Q function as an integral form of the classical Q function over the state-action distribution of each agent. To distinguish such Q function from the classical one, we  called it integrated Q (IQ) function. (See also Section \ref{I-remark}). 

Next, we derive the suitable form of DPP for this {IQ} function.
This DPP  generalizes  the classical DPP for Q-learning of MDP  to that of  MKV system, and extends  the DPP for MFCs to the learning framework. 
To accommodate model-based learning for MFCs, we also obtain the corresponding DPP for the value function.

Finally, we illustrate through numerical experiments the time consistency of the {IQ} function.

\paragraph{Relation to existing works.}
Our analysis and framework for establishing DPP for MFCs with learning differ fundamentally  from those in \cite{LP2014}, \cite{PW2016}, and  \cite{DPT2019} on DPP for value functions of MFCs without learning.

 The first is the adoption of  relaxed controls  instead of   strict controls  used in these earlier works.  As illustrated in Example \ref{wrong_Q} in Section \ref{wrong_Q}, optimal controls for MFCs with learning are intrinsically relaxed types, whereas classic control problems with concave reward functions are inevitably strict even for MFGs (see \cite{Lacker2015}).
 Relaxed controls are essential for learning, and in particular for RL which is characterized with exploration and exploitation. Exploration relies on   randomized strategies  with actions sampled from a distribution of actions. Relaxed controls are  known also as  mixed strategies in game theory  (\cite{WLB2010,DFR1998}, and  \cite{MT2013}), also for MFC without learning in \cite{LACKER2017}.
 Moreover, incorporation of  entropy regularization in many machine learning problems would destroy the convexity or the concavity structure  of the value function,  and  optimal controls  are necessarily  relaxed ones. 
 
  The {second} is  the aforementioned {IQ} function, identified and analyzed for the first time in the learning framework  on the lifted probability measure space with relaxed controls.
 
 To the best of our knowledge, this is the first time that DPP is rigorously established for MFCs with learning.
 This form of DPP provides one critical insight: learning problems with  MFCs  can be recast as  general forms of RLs where the state variable is replaced by  the probability distribution. This reformulation  paves the way for developing efficient value-based and policy-based algorithms for MFCs with learning. It is also the first step towards future theoretical development of learning problem with MFCs. 
 For instance,  \cite{MP2019} has further established the DPP for learning in a discrete-time model with the incorporation of  common noise and with open-loop controls.

 \paragraph{Outline of the paper.} The rest of the paper is organized as follows.
 Section $2$ presents the mathematical framework of MFCs with learning. Section $3$ introduces the {IQ} function and establishes DPPs for both the {IQ} function and the value function.
Section $4$ concludes by revisiting   Example \ref{wrong_Q} with the performance of the {IQ} function. {{Section $5$ demonstrates an example on equilibrium pricing with {IQ} function.}}

\paragraph{Notations.} %Throughout the paper, the following notations will be used unless otherwise specified. 

\begin{itemize}
    \item  For a measurable space $(X, \Fc_X)$, $\Pc(X)$ denotes the space of all probability measures on $(X, \Fc_X)$. When $X$ is a {\it compact} topological space, $\Bc(X)$ denotes its Borel $\sigma$-field, $\Pc(X)$ is endowed with the topology of weak convergence,
{and hence any probability measure $\mu$ $\in$ $\Pc(X)$ has a first moment}. $\Wc_1$ denotes the Wasserstein distance of order $1$ such that
\beqs
\Wc_1(\mu, \mu')= \inf\left\{\biggl(\int_{X \times X}d_{X}(x, x')\nu_{}(dx, dx')\biggl):  \nu_{} \in \Pc(X \times X) \mbox{ with marginals } \mu, \mu' \in \Pc(X)\right\}.
\enqs
$\Pc(X)$ is always equipped with $\Wc_1(\mu, \mu')$. %If $(X, d_X)$ is Polish, so is $(\Pc(X), \Wc_1)$. 
The Borel $\sigma$-field of $\Pc(X)$ is $\sigma$-field induced by the evaluation $\Pc(X) \ni \mu \mapsto \mu(C)$ for any {Borel set} $C \subset X$. Note that the Borel $\sigma$-field of $\Pc(X)$ is generated by $\Wc_1$. (See e.g. \cite{Villani2009} and \cite{Lacker2015}).
\item When $X$ is finite, $\Pc(X)$ $=$ $\left\{(p_i)_{i=1}^{|X|} \in \R^{|X|}: \sum_{i=1}^{|X|} p_i =1, p_i \geq 0\right\}$ is the probability simplex in $\R^{|X|}$, where $|X|$ denotes the size of $X$; Moreover, $X$ is always equipped with discrete metric, i.e., $d(x, x') = {{\bf 1}}_{\{x \neq x'\}}$. {In this case,  $\Wc_1$ is equivalent to the $L^1$-norm. (See e.g. \cite{GS2002}).}
\item For a metric space $X$, $\Mc(X)$ denotes the set of all real-valued measurable functions on $X$. For each bounded $f \in \Mc(X)$, the sup norm of $f$ is defined as $\|f\|_\infty= \sup_{x \in X} |f(x)|$. 
\item Denote $(\Omega, \Fc= \{\Fc_t\}_{t=0}^\infty, \P)$ as a probability space with $\Omega$ being Polish space, $\Fc$ its Borel $\sigma$ field and $\P$ an atomless probability measure, and denote $L(\Omega, \Fc, \P; X)$ as the space of all X-valued random variables; $(\Omega, \Fc= \{\Fc_t\}_{t=0}^\infty, \P)$ is ``rich'' in the sense that for any $\mu \in \Pc(X)$, there exists $\xi \in L(\Omega, \Fc, \P; X)$ satisfying $\xi \sim \mu$. Given another measurable space $(Y, \Fc_Y)$, we say a measure-valued function $f: Y \to  \Pc(X)$ is measurable if if the evaluation $f(C): Y \to \R$ is measurable for any $C \in \Fc_X$.
\item {Given two measurable spaces $(X, \Fc_X)$ and $(Y, \Fc_Y)$, for a measurable function $f: X \to Y$ and a measure $\mu \in \Pc(X)$, the pushforward measure $f \star \mu$ is defined to be a probability measure $\Fc_Y \to \R$: $f \star \mu(C) = \mu(f^{-1}(C))$ for any $C \in \Fc_Y$.}
\item {{Given a metric space $X$, $\delta_x$ denotes the Dirac measure on some fixed point $x \in X$. $\N$ denotes the set of all positive integers.}}
\end{itemize}

\section{The Mathematical Framework of MFCs with Learning}
 \subsection{Review: Single-Agent Reinforcement Learning} \label{secsclassicalRL}
 Before introducing the mathematical framework of MFCs with learning, 
 let us review relevant terminologies for single-agent RL.
 
 Let us  start with a discrete time MDP in an infinite time horizon of the following form
 \beq \label{equ: singlev}
v(s) = \sup_{\pi} v^{\pi}(s):=
 \sup_{\pi}\E^\pi\biggl[\sum_{t=0}^\infty \gamma^t r(s_t, a_t) \biggl| s_0 = s\biggl],
 \enq
 \vspace{-0.5cm}
 subject to
 \vspace{-0.5cm}
 \beq \label{equ: singledynamics}
 s_{t + 1} \sim P(s_t, a_t), \;\;\; a_t \sim \pi_t(s_t),\;\; t \in \N \cup \{0\}.
 \enq
Here and throughout the paper, $\E^{\pi}$ denotes the expectation under control ${\pi}$; the state space $(\Sc, d_{\Sc})$ and the action space $(\A, d_{\A})$ are two compact separable metric space , including the case of  $\Sc$ and $\A$  being finite, as often seen in RL literature; $\gamma$ $\in$ $(0, 1)$ is a  discount factor; $s_t \in \Sc$ and $a_t \in \A$ are the state and the action at time $t$; $P: \Sc \times \A \to \Pc(\Sc)$ is the transition matrix of the underlying Markov system; the reward  $r(s, a)$ is random valued in $\R$ for each $(s, a) \in \Sc \times \A$; and the control $\pi =\{\pi_t\}_{t=0}^\infty$ can be either deterministic such that  $\pi_t: \Sc \to \A$,  or randomized such that $\pi_t: \Sc \to \Pc(\A)$. {Note that our results can be easily extended to the situation where $(\mathcal{S},d_{\mathcal{S}})$ and $(\mathcal{A},d_{\mathcal{A}})$ are  not  compact  but  the  measures  under  consideration  have  a  first  moment.}

When the transition dynamics $P$ and the reward function $r$ are unknown, this MDP becomes an RL problem, which is to find an optimal control $\pi$ (if it exists) while simultaneously learning the unknown $P$ and $r$. The learning of $P$ and $r$ can be either explicit or implicit, which leads to model-based and model-free RL, respectively.

One basic model-free algorithm for  RL  is the Q-learning algorithm,  in  which a Q function is defined as
\beq \label{defsingleQ}
Q(s, a) = \E[r(s, a)] + \gamma \E_{s' \sim P(s, a)}[v(s')].
\enq
%{\color{blue}which is a summation of %expected reward of taking action $a$ at %state $s$ and the value of playing %optimally afterward.} 
The well-known DPP for such Q function is expressed in the form of the following Bellman equation
\beq \label{equ: singleQ}
Q(s, a) = \E[r(s, a)] + \gamma \E_{s' \sim P(s, a)}\sup_{a' \in \A} Q(s', a').
\enq
Meanwhile, the Bellman equation for the value function is 
\beq \label{equ:classicalV}
v(s) = \sup_{a \in \A} \big\lbrace \E[r(s, a)] + \gamma \E_{s' \sim P(s, a)}[v(s')] \big\rbrace.
\enq
By the definition of Q function and \reff{equ:classicalV}, the value function  and the 
Q function are closely connected by the following relation
$$v(s) = \sup_{a \in \A} Q(s, a).$$ 
Thus, one can retrieve the 
optimal (stationary) control $\pi^*(s)$ (if it exists) from  $Q(s, a)$, i.e.,  $\pi^*(s) \in \arg\max_{a \in \A} Q(s, a)$.

\subsection{Mathematical Framework  of MFCs with Learning} \label{se:MDPMKV}
Our MFC framework is motivated from cooperative $N$-agent games. To see this, assume that there are $N$ homogeneous agents. At each time step $t \in \N \cup \{0\}$, the state and the action of each agent $i$ $(=1, \cdots, N)$ is denoted as $s_t^i \in \Sc$ and $a_t^i \in \A$. Each agent $i$ moves to the next state $s_{t + 1}^i$ according to the transition probability $P(s_t^i, \mu_t^N, a_t^i, \nu_t^N)(\cdot)$ and receives a reward $r_t^i$ $\sim$ $R(s_t^i, \mu_t^N, a_t^i, \nu_t^N)(\cdot)$, where $\mu_t^N = \frac{1}{N} \sum_{i=1}^N\delta_{s_t^i}$ and  $\nu_t^N = \frac{1}{N} \sum_{i=1}^N \delta_{a_t^i}$ are the empirical distributions of $s_t^i$ and $a_t^i$, $i=1, \ldots, N$;  the probability transition $P$ is a measurable function from $\Sc \times \Pc(\Sc) \times \A \times \Pc(\A)$ to $\Pc(\Sc)$ and is unknown; and the distribution of the reward function $R:$ $\Sc \times \Pc(\Sc) \times \A \times \Pc(\A)$ $\to$ $\Pc(\R)$ is measurable and unknown.

Now, taking $N \to \infty$, by law of large number, we can consider MFCs, which are stochastic games under Pareto optimality with infinitely many
 identical, indistinguishable, and interchangeable agents. We can define analogously the learning framework for MFCs over the infinite horizon with the same terminology  $\Sc$, $\Pc(\Sc)$, $\A$, $\Pc(\A)$, {$R$}, and $\gamma$ used in the RL framework.  Due to the indistinguishability of agents, one can focus on a single representative agent 
and consider an auxiliary control problem in which the  objective function depends on the average cost/reward  of every agent.

At each time $t \in \N \cup \{0\}$, the state of the representative agent is $s_t$ $\in$ $\Sc$. Given the population state distribution, i.e., the probability distribution $\mu_t$ $\in$ $\Pc(\Sc)$ of state $s_t$, the representative agent takes an action $a_t$ $\in$ $\A$ according to some control $\pi_t$. She will receive an instantaneous stochastic reward $r_t=r(s_t, \mu_t, a_t, \nu_t)$ $\sim R(s_t, \mu_t, a_t, \nu_t)(\cdot)$ and her state will move to the next state $s_{t + 1}$ according to a probability transition function of mean-field type $P(s_t, \mu_t, a_t, \nu_t)(\cdot)$. {Here $\nu_t \in \Pc(\A)$} denotes the action distribution at time $t$. 

The (accumulated) reward of the auxiliary control problem, given the initial state 
$s_0 = \xi$ $\in L(\Omega, \Fc, \P; \Sc)$,  and given the control $\pi=\{\pi_t\}_{t=0}^\infty$, is defined as
\beq \label{eq:vpidef}
{V}^{\pi}(\xi) &=& \E^{\pi}\biggl[\sum_{t=0}^{\infty} \gamma^t r(s_t, \mu_t, a_t, \nu_t)  \biggl| s_0 = \xi\biggl],
%&=& \E^\pi\Big[\sum_{l=0}^\infty \gamma^l r_{l + t} | s_t = \xi \Big]
\enq
\vspace{-0.3cm}
subject to
\vspace{-0.3cm}
\beq \label{eq: dynamicdef}
s_{t + 1} \sim P(s_t, \mu_t, a_t, \nu_t)(\cdot), \;\;\;a_t \sim \pi_t(s_t,\mu_t)(\cdot), \;\;\; {{r(s_t, \mu_t, a_t, \nu_t) \sim R(s_t, \mu_t, a_t, \nu_t)(\cdot)}}.
\enq

%When $T$ $<$ $\infty$, we fix $\gamma$ $=$ $1$. When $T$ $=$ $\infty$,  we take $0$ $<$ $\gamma$ $<$ $1$.
%Compared to the infinite horizon case, the reward function and the transition function may rely on time(For example, in mean-variance criteria, the reward function is inherently time-dependent.) To unify finite horizon case and infinite horizon case, we assume $r$ and $P$ are time independent for simplicity.

The admissible controls are of feedback forms and relaxed types. That is, at  each time $t$, $\pi_t= \pi_t(s_t, \mu_t)$ and  $\pi_t:$ $\Sc$ $\times$ $\Pc(\Sc)$ $\to$ $\Pc(\A)$ is measurable and maps the current state and the current state distribution to a distribution over the action space. We denote by $\Pi_t$ such set of admissible controls starting from time  $t\in \N \cup \{0\}$, and set $\Pi= \Pi_0$. Note that a relaxed control differs from a strict control, which is {{a measurable function}} defined from $\Sc$ $\times$ $\Pc(\Sc)$ to $\A$. Clearly a strict control $\alpha_t$ is a relaxed control with a special form of $\pi_t$ $=$ $\delta_{\alpha_t}$, the point mass at some {{measurable function}} $\alpha_t$ $:$ $\Sc$ $\times$ $\Pc(\Sc)$ $\to$ $\A$. {Note that under a feedback relaxed control $\pi_t$, we have $\nu_t(\cdot)=\int_{s \in \mathcal{S}} \pi_t(s, \mu_t)(\cdot)\mu_t(ds)\in\Pc(\A)$.}

The objective of the auxiliary controller is to  find  
\beq \label{equ: defv}
V(\xi) &=& \sup_{{\pi} \in \Pi} V^{\pi}(\xi), \;\;\; \text{for any}\; \xi \in L(\Omega, \Fc, \P; \Sc),
\enq
and to search for an optimal control (if it exists).
%As mentioned in the introduction, Bellman equation for Q function is the building block for many RL algorithms, e.g., Q learning, policy evaluation, and actor-critic.

Note that the nature of MFCs is different from the single-agent RL \reff{equ: singlev}-\reff{equ: singledynamics} in that it reflects the nature of MFC that the representative agent interacts with all agents via probability distributions of states $\mu_t$ and actions $\nu_t$.

To ensure  the well-definedness of this learning problem for MFC \reff{eq:vpidef}-\reff{equ: defv}, throughout the paper we assume:

{{\bf Outstanding Assumption} ({\bf{A}}).} \label{ass: wellposedness} {{For any initial state $s_0 = \xi$ $\sim \mu$,}}
\beqs
\sup_{\pi \in \Pi}\E^{\pi}\Big[\sum_{t=0}^\infty \gamma^t \big|r(s_t, \mu_t, a_t, \nu_t)\big|\Big] & < & \infty.
\enqs

It is clear that when $\|r\|_\infty \leq r_{\text{max}}$, a.s. for some $r_{\text{max}} > 0$, condition in Outstanding assumption ({{A}}) is satisfied. In general, the following conditions (A1)-(A3) will  ensure  Outstanding Assumption ({{A}}). 

\begin{description}
\begin{comment}
\item [(A1)] (Continuity of control $\pi$)%For fixed arbitrary $(s^o, \delta_{s^o}, a^o)$ $\in$ $\Sc$ $\times$ $\Pc(\Sc)$ $\times$ $A$, $\delta_{s^o}$ being the dirac measure at point $s^o$ $\in$ $\Sc$,
There exists some positive constant $L_\pi > 0$ such that given $s' \in \Sc$, $\mu' \in \Pc(\Sc)$, for all $s$ $\in$ $\Sc$, $\mu \in $ $\Pc(\Sc)$,
\beqs
\Wc_1(\pi(s, \mu), \pi(s', \mu'))  \leq L_\pi\Big( d_{\Sc}(s, s') +\Wc_1(\mu, \mu')\Big).
\enqs
\end{comment}
\item[(A1)] For fixed arbitrary $(s^o, \delta_{s^o},{a^o, \delta_{a^o}})$ $\in$ $\Sc$ $\times$ $\Pc(\Sc)$ $\times \A \times \Pc(\A)$,
there exists some positive constant $L_P$ such that for every $(s, \mu, a, {{\nu}})$ $\in$ $\Sc$ $\times$ $\Pc(\Sc)$ $\times$ $\A$ $\times \Pc(\A)$,
\beqs
& & \int_{s' \in \Sc} d_{\Sc}(s', s^o)\Big(P(s, \mu, a, \nu)(ds'){- P(s^o, \delta_{s^o}, a^o, \delta_{a^o})(ds')}\Big)\\
&\leq & L_P \Big( d_{\Sc}(s, s^o) + {d_\A(a, a^o)}+ \Wc_1(\mu, \delta_{s^o}) + \Wc_1(\nu, \delta_{a^o}) \Big).
\enqs
\item[(A2)] For fixed arbitrary $(s^o, \delta_{s^o}, a^o, \delta_{a^o})$ $\in$ $\Sc$ $\times$ $\Pc(\Sc)$ $\times$ $\A$ $\times$ $\Pc(\A)$,
there exists some positive constant $L_R$ such that for every $(s, \mu, a, \nu)$ $\in$ $\Sc$ $\times$ $\Pc(\Sc)$ $\times$ $\A$ $\times \Pc(\A)$,
\beqs
\int_{\R} |r| \Big(R(s, \mu, a, \nu)(dr)- R(s^o, \delta_{s^o}, a^o, \delta_{a^o})(dr)\Big) \leq L_R \Big( d_{\Sc}(s, s^o) + d_\A(a, a^o) + \Wc_1(\mu, \delta_{s^o}) + \Wc_1(\nu, \delta_{a^o})\Big).
\enqs
\item [(A3)] For fixed arbitrary $(s^o, \delta_{s^o}, a^o)$ $\in$ $\Sc$ $\times$ $\Pc(\Sc)$ $\times$ $\A$,
there exists some positive constant $L_\pi$ such that for every $(s, \mu)$ $\in$ $\Sc$ $\times$ $\Pc(\Sc)$ 
\beqs
&& \int_{a \in \A} d_{\A}(a, a^o) \Big(\pi(s, \mu)(da) - \pi(s^o, \delta_{s^o})(da)\Big) \leq L_{\pi} \Big( d_{\Sc}(s, s^o) + \Wc_1(\mu, \delta_{s^o})\Big), \\
&& \int_{a \in \A}d_{\A}(a, a^o) \pi(s^o, \delta_{s^o})(da) < + \infty. 
\enqs
\end{description}

In the MFC formulation, it is important to view alternatively the control  $\pi_t $ as a {{measurable}} mapping from $\Pc(\Sc)$ to $\Hc$. For notational simplicity, set $h_t: =$ $\pi_t(\mu_t) \in \Hc$,  where
\begin{eqnarray}\label{H_space}
\Hc=\{h \;|\; h: \Sc \to \Pc(\A) \; \text{is measurable} \;\; \text{such that Outstanding Assumption (A) holds} \}.
\end{eqnarray}
Here $\Hc$ contains all ``local'' policies that depend only on the state variable. 

\begin{comment}
 We also denote $\Pi_t$ $=$ $\{(\pi_l)_{t \leq l \leq T}| \pi_l \in \A\}$, hence $\Pi_0$ $=$ $\Pi$.
 \end{comment}

\medskip

We first show that the probability distribution of the dynamics $\{\mu_t\}_{t=0}^\infty$  in \reff{eq: dynamicdef} satisfies the following flow property. %Meanwhile, note that the distribution flow $\{\mu_t\}_{t=0}^\infty$ depends on initial random value $\xi$ via its distribution $\mu$. 
\begin{Lemma}  \label{lem: flowmut} (Flow property of $\mu_t$) Under Outstanding Assumption (A), for any given admissible policy $\pi \in \Pi$ and the initial state distribution $\mu$, the evolution of the state distribution $\{\mu_t\}_{t=0}^\infty$  in \reff{eq: dynamicdef} follows
\beq \label{eqv:flowmut}
\mu_{t + 1}  &=& \Phi(\mu_t, \pi_t(\mu_t)),  \; {{\mu_0 = \mu}},\;\; t \in \N \cup \{0\}.
\enq
Here $\Phi: \Pc(\Sc) \times \Hc \to \Pc(\Sc)$ {is measurable} and defined by
\beq \label{equ: Phi}
\Phi(\mu, h)(ds') &: =& \int_{s \in \Sc} \mu(ds) \int_{a \in \A} h(s)(da) P(s, \mu, a, \nu(\mu, h))(ds')
\enq
for any $(\mu, h) \in \Pc(\Sc) \times \Hc$ and {\color{black}{${\nu}(\mu, h)(\cdot) := \int_{s \in \mathcal{S}} h(s)(\cdot)\mu(s) \in \Pc(\A)$}}. 
In particular, when $h_t = \delta_{\alpha_t}$ for some {{measurable function}} $\alpha_t:$ $\Sc$ $\to$ $\A$ (i.e., $h_t$ is a strict control),
\beqs
\mu_{t + 1} = \Phi(\mu_t, \delta_{\alpha_t}) & =& \int_{s \in \Sc} \mu_t(ds)P(s, \mu_t, \alpha_t(s),  {\color{black}\alpha_t \star \mu_t})), \;\; t \in \N \cup \{0\},
\enqs
where $\alpha_t \star \mu_t$ is the pushforward of measure $\mu_t$. %{\color{blue}[definition?]}
\end{Lemma}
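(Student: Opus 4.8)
The plan is to derive the one-step recursion \reff{eqv:flowmut} directly from the dynamics by the tower property, and then to verify as separate, purely measure-theoretic points that the map $\Phi$ of \reff{equ: Phi} is well defined (i.e.\ $\Phi(\mu,h)\in\Pc(\Sc)$) and measurable on $\Pc(\Sc)\times\Hc$. Granting these, the initialization $\mu_0=\mu$ together with the one-step identity applied at every $t$ yields the full flow, and a simultaneous induction shows each $\mu_t$ is a well-defined element of $\Pc(\Sc)$.

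For the recursion, I would fix $t\in\N\cup\{0\}$ and a Borel set $C\subset\Sc$, and write $h_t:=\pi_t(\mu_t)\in\Hc$, recalling that $\mu_t$ is by definition the law of $s_t$ and that, conditionally on $s_t$, the action obeys $a_t\sim\pi_t(s_t,\mu_t)=h_t(s_t)$. Taking the process $(s_t,a_t)$ driven by \reff{eq: dynamicdef} as given and conditioning first on $(s_t,a_t)$ and then on $s_t$,
\beqs
\mu_{t+1}(C)=\P(s_{t+1}\in C)
&=&\E^{\pi}\big[\P(s_{t+1}\in C\mid s_t,a_t)\big]=\E^{\pi}\big[P(s_t,\mu_t,a_t,\nu_t,C)\big]\\
&=&\E^{\pi}\Big[\int_{a\in\A}h_t(s_t,da)\,P(s_t,\mu_t,a,\nu_t,C)\Big]=\int_{s\in\Sc}\mu_t(ds)\int_{a\in\A}h_t(s,da)\,P(s,\mu_t,a,\nu_t,C).
\enqs
It then remains to match $\nu_t$ with the term $\nu(\mu_t,h_t)$ appearing in \reff{equ: Phi}: since $\nu_t(\cdot)=\int_{s\in\Sc}\pi_t(s,\mu_t)(\cdot)\,\mu_t(ds)=\int_{s\in\Sc}h_t(s)(\cdot)\,\mu_t(ds)$, this is exactly $\nu(\mu_t,h_t)$, so the last display equals $\Phi(\mu_t,h_t)(C)$. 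As $C$ was an arbitrary Borel set, $\mu_{t+1}=\Phi(\mu_t,\pi_t(\mu_t))$. The strict-control case follows by substituting $h_t=\delta_{\alpha_t}$: the inner $\A$-integral collapses to $P(s,\mu_t,\alpha_t(s),\nu_t,\cdot)$, and $\nu(\mu_t,\delta_{\alpha_t})(\cdot)=\int_{s\in\Sc}\delta_{\alpha_t(s)}(\cdot)\,\mu_t(ds)=\alpha_t\star\mu_t$.

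Finally, for the structural claims: for fixed $(\mu,h)$ the set function $C\mapsto\Phi(\mu,h)(C)$ in \reff{equ: Phi} is a probability measure by Tonelli's theorem (the integrand is nonnegative and measurable, $P(s,\mu,a,\nu,\cdot)$ is a probability measure for every argument, and $h(s),\mu$ are probability measures), hence $\Phi(\mu,h)\in\Pc(\Sc)$. For measurability of $(\mu,h)\mapsto\Phi(\mu,h)$ into $\Pc(\Sc)$, by the definition of the Borel $\sigma$-field on $\Pc(\Sc)$ it suffices to check that $(\mu,h)\mapsto\Phi(\mu,h)(C)$ is measurable for each Borel $C\subset\Sc$; via Fubini applied to the iterated integral this reduces to the joint measurability of $(\mu,h)\mapsto\nu(\mu,h)$ (again checked on the evaluation maps $\nu\mapsto\nu(C')$) and of $(\mu,h,s,a)\mapsto P\big(s,\mu,a,\nu(\mu,h),C\big)$, the latter obtained by composing the assumed measurability of $P$ on $\Sc\times\Pc(\Sc)\times\A\times\Pc(\A)$ with the measurable maps just listed. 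I expect this last bookkeeping — pinning down the $\sigma$-field on $\Hc$ and verifying the joint measurability of the parametrized integrals — to be the only genuine obstacle; the probabilistic content is just the two-step conditioning above. A convenient alternative, if one prefers to avoid the abstract joint-measurability argument, is to approximate $h$ and $P$ by finitely supported kernels, for which \reff{equ: Phi} is manifestly measurable, and pass to the limit by dominated convergence.
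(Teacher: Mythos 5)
Your argument is correct and follows essentially the same route as the paper: the paper also conditions via the tower property (using bounded measurable test functions $\varphi$ in place of your indicator functions $\mathds{1}_C$) and then identifies the resulting iterated integral with $\Phi(\mu_t,\pi_t(\mu_t))$. Your additional verification that $\Phi(\mu,h)\in\Pc(\Sc)$ and that $\Phi$ is measurable is a harmless supplement the paper leaves implicit.
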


\noindent \begin{proof}{Proof of Lemma \ref{lem: flowmut}.} \; Fix ${\pi} =\{\pi_t\}_{t=0}^\infty$ $\in$ $\Pi$. For any bounded measurable function $\varphi$ on $\Sc$, by the law of iterated conditional expectation,
\beqs
\E^{\pi}[\varphi(s_{t + 1})] &=& \E^{\pi}\Big[\E^{\pi}\big[\varphi(s_{t + 1}) \big| s_1 \cdots, s_t\big]\Big]\\
&=& \E^{\pi}\Big[\int_{s' \in \Sc} \varphi(s') P(s_t, \mu_t, a_t, \nu_t)(ds')\Big]\\
&=& \int_{s' \in \Sc} \varphi(s') \E^{\pi}\Big[ P(s_t, \mu_t, a_t, \nu_t)(ds')\Big]\\
&=& \int_{s' \in \Sc} \varphi(s') \int_{s \in \Sc} \mu_t(ds)\int_{a \in \A}{\pi}_t(s, \mu_t)(da)P(s, \mu_t, a, \nu(\mu_t, \pi_t(\mu_t)))(ds').
%&=& \int_{s' \in \Sc} \varphi(s')\Phi(\mu_t, \pi_t(\mu_t))(ds').mfc_objective_1
\enqs
\end{proof}
%Notice that the above argument holds for both a finite horizon and an infinite horizon case.

Now,  given Outstanding Assumption (A), adopting the technique from \cite{PW2016} for strict controls, we  can show that  the value function $V^{\pi}(\xi)$ for relaxed controls can still be rewritten in terms of the state distribution flow $\{\mu_t\}_{t=0}^\infty$ and that it depends on the initial random variable $\xi$ only through the probability distribution $\mu$. In other words, $V^\pi(\xi)$ can be written as $v^\pi(\mu)$ for some function $v^\pi: \Pc(\Sc) \to \R$. More precisely, 
\begin{Lemma} \label{lemma1} (Law-invariant property) Under Outstanding Assumption (A), given any $\pi$ $\in$ $\Pi$,  $V^{\pi}(\xi)$ in \reff{eq:vpidef} can be written as
\beq \label{equ:reformv}
v^{\pi}(\mu) &=& \sum_{t=0}^{\infty} \gamma^{t} \widehat r(\mu_t, \pi_t(\mu_t)),
\enq
where the integrated averaged reward function $\widehat r$ is the measurable function from $\Pc(\Sc)$ $\times$ $\Hc$ to $\R$ such that
\beq \label{hatr}
\widehat r(\mu, h) &:=& \int_{s \in \Sc} \mu(ds)\int_{a \in \A} h(s)(da) \int_{r \in \R} r R(s, \mu, a, \nu(\mu, h))(dr).
\enq
In particular, if $h_t = \delta_{\alpha_t}$ for some $\alpha_t:$ $\Sc $ $\to$ $\A$, $t \in \N \cup \{0\}$ (i.e.,  $\pi_t$ is a strict control), and $R(s, \mu, a, \nu)(\cdot) = \delta_{r(s, \mu, a, \nu)} (\cdot)$ for some $r: \Sc \times \Pc(\Sc) \times \A \times \Pc(\A) \to \R$, then with a slight abuse of notation, we can write $v^\pi(\mu) = v^\alpha(\mu)$ and 
\beqs
 v^\alpha(\mu) = \sum_{t=0}^\infty \gamma^t \widehat r(\mu_t, \delta_{\alpha_t}) = \sum_{t=0}^\infty \gamma^t\int_{s \in \Sc} r(s, \mu_t, \alpha_t(s), {\color{black}\alpha_t \star \mu_t})\mu_t(ds).
\enqs

%{\color{red}{Note that under Assumption \ref{ass: wellposedness}, }}

\end{Lemma}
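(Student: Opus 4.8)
The plan is to prove the reformulation term by term: for each fixed $t$ I compute $\E^{\pi}[r_t]$ by iterated conditioning, identify it with $\widehat r(\mu_t,\pi_t(\mu_t))$, and then exchange the discounted sum with the expectation. The one structural ingredient driving everything is Lemma \ref{lem: flowmut}: the marginals $\{\mu_t\}$ — and hence the action marginals $\nu_t=\nu(\mu_t,\pi_t(\mu_t))$ — are \emph{deterministic}, generated from the initial law $\mu=\mathrm{Law}(\xi)$ and $\pi$ alone via the recursion \reff{eqv:flowmut}; consequently, inside any conditional expectation at time $t$ they may be treated as constants. This is exactly what makes the interchanges of order of integration below legitimate.

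Concretely, I would first introduce the conditional mean reward $\bar r(s,\mu,a,\nu):=\int_{\R} r\,R(s,\mu,a,\nu,dr)$, which is finite and measurable in its arguments by (A2). Since $r_t\sim R(s_t,\mu_t,a_t,\nu_t,\cdot)$ in \reff{eq: dynamicdef} and $\mu_t,\nu_t$ are deterministic, conditioning on $(s_t,a_t)$ gives $\E^{\pi}[r_t\mid s_t,a_t]=\bar r(s_t,\mu_t,a_t,\nu_t)$; then, using $a_t\mid s_t\sim\pi_t(s_t,\mu_t)$ and $s_t\sim\mu_t$, two further conditionings yield
\[
\E^{\pi}[r_t]=\int_{\Sc}\mu_t(ds)\int_{\A}\pi_t(s,\mu_t,da)\,\bar r\big(s,\mu_t,a,\nu(\mu_t,\pi_t(\mu_t))\big)=\widehat r\big(\mu_t,\pi_t(\mu_t)\big),
\]
which is precisely \reff{hatr} after recalling $\nu_t=\nu(\mu_t,\pi_t(\mu_t))$. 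Outstanding Assumption (A) provides $\sum_{t\ge0}\gamma^t\,\E^{\pi}|r_t|<\infty$, so Fubini--Tonelli permits $V^{\pi}(\xi)=\E^{\pi}\big[\sum_t\gamma^t r_t\mid s_0=\xi\big]=\sum_t\gamma^t\E^{\pi}[r_t]=\sum_t\gamma^t\widehat r(\mu_t,\pi_t(\mu_t))$. By Lemma \ref{lem: flowmut} the whole flow $\{\mu_t\}$ is a measurable function of $(\mu,\pi)$ only, so this quantity depends on $\xi$ solely through $\mu$; defining $v^{\pi}(\mu)$ to be it gives \reff{equ:reformv}. The strict-control case is then pure substitution: with $h_t=\delta_{\alpha_t}$ one has $\nu(\mu_t,\delta_{\alpha_t})=\alpha_t\star\mu_t$ and $\int_{\A}\delta_{\alpha_t(s)}(da)\,\bar r(s,\mu_t,a,\alpha_t\star\mu_t)=\bar r(s,\mu_t,\alpha_t(s),\alpha_t\star\mu_t)$, which under $R(s,\mu,a,\cdot)=\delta_{r(s,\mu,a)}(\cdot)$ reduces $\bar r$ to $r$; plugging this together with the corresponding specialization of $\Phi$ from Lemma \ref{lem: flowmut} into \reff{equ:reformv} gives the displayed formula for $v^{\alpha}(\mu)$.

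The computation is routine; two points require care. First, to make the iterated conditionings in the previous paragraph rigorous one must fix a filtration $\{\Fc_t\}$ on the canonical space carrying the controlled chain, under which $s_t$ is $\Fc_t$-measurable, $a_t$ has conditional law $\pi_t(s_t,\mu_t)$ given $\Fc_t$, and $r_t$ is integrable; this is standard once the construction of the chain driven by the fixed sequence of kernels $P(\cdot,\mu_t,\cdot,\nu_t,\cdot)$ is written out. Second, one must check that $\bar r$ and $\widehat r:\Pc(\Sc)\times\Hc\to\R$ are genuinely measurable, which follows from the measurability of $P$, $R$ and of the elements of $\Hc$, combined with standard results on measurability of integrals against transition kernels over Polish spaces.

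I expect this second point — the measurability bookkeeping for $\widehat r$ (and for $\Phi$, already used in Lemma \ref{lem: flowmut}) — to be the only mildly technical obstacle. Everything substantive reduces to the iterated-expectation identity above, whose validity rests entirely on the determinism of $\{\mu_t,\nu_t\}$ guaranteed by Lemma \ref{lem: flowmut}; without that, the flow would be coupled to the realization of $\xi$ and the passage from $V^{\pi}(\xi)$ to a function of $\mu$ alone would fail.
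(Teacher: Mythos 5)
Your argument is correct and is essentially the paper's own route: the paper likewise obtains \reff{equ:reformv} as a direct consequence of Fubini's theorem under Outstanding Assumption (A), computing $\E^{\pi}[r_t]$ by iterated conditioning (integrating out $r_t$, then $a_t\sim\pi_t(s_t,\mu_t)$, then $s_t\sim\mu_t$) and invoking the deterministic flow of $\{\mu_t\}$ from Lemma \ref{lem: flowmut} so that the result depends on $\xi$ only through its law $\mu$. Your added care about the filtration and the measurability of $\widehat r$ is sound but not a departure in method, and the strict-control specialization by substituting $h_t=\delta_{\alpha_t}$, $\nu(\mu_t,\delta_{\alpha_t})=\alpha_t\star\mu_t$ matches the paper as well.
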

\begin{comment}
{\bf Proof.} Lemma \ref{lemma1} is an immediate result of Fubini's theorem under Assumption $({\bf{A}})$.
\beqs
v^\pi(\xi) &=& \E^\pi\Big[\sum_{t=0}^\infty \gamma^{t} r(s_t, \mu_t, a_t)\Big]\\
&=& \E\Big[\sum_{t=0}^\infty \gamma^t \int_{a \in \A} \pi_t(s_t, \mu_t, da)  r(s_t, \mu_t, a)\Big]\\
&=& \sum_{t=0}^{\infty} \gamma^{t} \E\Big[\int_{a \in \A}\pi_t(s_t, \mu_t, da) r(s_t, \mu_t, a)\Big]\\
&=& \sum_{t=0}^{\infty} \gamma^{t} \widehat r(\mu_t, \pi_t(\mu_t)),
\enqs
with $\widehat r$  defined in \reff{hatr}.
\end{comment}
\medskip

The flow property of $\{\mu_t\}_{t=0}^\infty$ {and the law-invariant property of $v^\pi$} in the above lemmas suggest that MFCs with learning may be viewed as an RL
problem with the  state variable $s_t$ replaced by the probability distribution $\mu_t$. 
This view is useful for subsequent analysis, and in particular
critical for establishing the DPP, the main result of the paper.
%We may write $v^{\pmb \pi}_t(\xi)$ $=$ $v^{\pmb \pi}_t(\mu)$ with a bit abuse of %notation.

\section{DPP for Learning MFCs}

\subsection{Time Inconsistency: An Example} 
\label{sec:wrong_Q}
Recall from Section \ref{secsclassicalRL}  the Bellman equation for the Q function
in classical single-agent RL, 
%at each time $t$, the agent at state $s_t \in \Sc $ chooses her action $a_t \in \A $ according to some Markovian policy $\pi:$ $\Sc$ $\to$ $\Pc(\A)$, she will then receive a reward $r(s_t, a_t)$ and her state moves to $s_{t+1}$ according to $P(s_t, a_t, \cdot)$. 
\beqs
Q(s,a)=\E[r(s,a)]+\E_{s'\sim P(s,a)} {\sup_{a' \in \A} Q(s', a')}.
\enqs
%where $r$ is the reward function, $v_t$ is the optimal value function of the problem at time $t$, and $P(s_t,a_t, \cdot)$ is the Markov transition probability.
%{\color{red}{ From Lemma \ref{lemma1}, the central controller's  value function  depends  on the probability distribution of the state  $\mu$. Therefore, the  Q function for MFCs with learning should  be dependent on $\mu$ instead of $s$.}} 
It is tempting to define such Q function for MFC in the learning framework.
Unfortunately, such Q function will not be time consistent, as demonstrated in the following example.

\begin{Example} \label{wrong_Q}
Take a two-state dynamic system with two choices of  actions. Denote the state space as {$\Sc$ $=$ $\{L, H\}$} and the action space as {$\A= \{{\rm ST}, {\rm MV}\}$}. The transition probability goes as follows: 
\beqs 
P(s, a, s') = \lambda_s {{\bf{1}}_{\{a = {\rm MV}\}}}, \;\; \text{if}\; s' \neq s \in \Sc, \; P(s, a, s') = 1 - \lambda_s {{\bf{1}}_{\{a = {\rm MV}\}}}, \;\;\text{if}\; s' = s \in \Sc
\enqs
{with $\lambda_{s}\in[0,1]$ for $s\in \mathcal{S}$.}
Here $P(s, a, s')$ is the probability of moving to state $s'$ when the agent in state $s$ takes the action $a$. That is, when the agent in the state $s$ takes action {{\rm ST}}, she will {stay} at the current state $s$; when the agent in the state $s$ takes the action {{\rm MV}}, she will {move} to a different state $s'$ with probability $0\leq \lambda_s \leq 1$, $s \in \Sc$ and stay at state $s$ with probability $1 - \lambda_s$, $s \in \Sc$. After each action, the representative agent will receive a reward  $r_t=$ ${\bf 1}_{\{s_t = H\}} - \big(\E[{\bf 1}_{\{s_t = H\}}]\big)^2 - \lambda \Wc_1(\mu_t, B)$. Here $\mu_t$ denotes the probability distribution of the state at time $t$, $B$ is a given Binomial distribution with parameter $p$ ($1 - \lambda_{L} \le p\le \lambda_{{H}}$), and $\lambda > 0$ is a scalar parameter.  Fix any arbitrary initial state distribution $\mu_0$ $=$ $p_0 {\bf 1}_{\{s_0=L\}} + (1-p_0) {\bf 1}_{\{s_0 =H\}}$ with some $0 \leq p_0 \leq 1$. 
If taking the standard  Q function with  the state variable  $s$ and the action variable $a$ instead of the state distribution $\mu$ and the local policy $h$, this leads to the standard Q-learning update:
 \begin{eqnarray} \label{wrong_bellman}
Q_{t + 1}(s_t, a_t) = (1 - l_t) Q_t(s_t, a_t) + l_t \times \big(r_t +  \gamma  \times \max_{a' \in \A}(Q_t(s_{t + 1}, a')\big).
\end{eqnarray}
Here $a_t \in \A$  is the action from all agents in the state $s_t$ at $0 \leq t \leq T$, ${l_t}$ is the learning rate of Q table, and  $r_t$ is the observed reward sampled from taking action $a_t$. Suppose that agents in both states ({{$L$}} and {{$H$}}) will choose actions according to an $\epsilon$-greedy policy. Namely, in each iteration $t$,  each agent in state $s$ ($s=L$ or $s=H$) will choose an action from $\arg\max_{a\in \mathcal{A}} Q_t(s,a)$ with probability $1-\epsilon$ and choose an arbitrary action with probability $\epsilon$. Then $\mu_t$ evolves according to Equation \reff{eqv:flowmut} with any initial population distribution $\mu_0$ under this $\epsilon$-greedy policy.
%we assume the access to the simulator $ (s_{t + 1}, r_t) = \Gc(s_t, \mu_t, a_t)$. That is, at each iteration $t$, the representative agent observes her state $s_t$ and takes the action $a_t$,  then the environment provides a new state of the representative agent $s_{t + 1}$ and a reward $r_t$ based on the population distribution $\mu_t$. Meanwhile, the population distribution $\mu_t$ evolves according to \reff{eqv:flowmut} under $\epsilon$-greedy policy, which can not be observed by the representative agent. 

\end{Example}

%%%%%%%%%%%%%%%%%%%%%%%%%%%%%%%%%%%%%%%%%%%%%%%%%%%%%%%%%%%%%%%%

{For simplicity, the Q function is initialized to be zero for every $s \in \Sc, a \in \A$. } Following this Q-leaning update \reff{wrong_bellman}, the experiment result on the convergence of Q function is reported below, with $T=10000$, $p=0.6$,  $\lambda_{{L}} = 0.5$, $\lambda_{{H}} =0.8$, $\lambda=10$,  $\gamma=0.5$.  Following \cite{even2003learning}, the learning rate is set as $l_t = \frac{1}{\# (s_t,a_t)+1}$ with $\# (s_t,a_t)$ the number of total visits to state-action pair $ (s_t,a_t)$ up to iteration $t$.

\begin{table}[H]
\centering
\caption{Convergence of Q function with different initial distribution.}\label{bad_Q_table}
\begin{tabular}{ |p{3.1cm}||p{2.6cm}|p{2.6cm}|p{2.6cm}| p{2.6cm} |}
 \hline
 \hline
 Initial distribution& $Q_T(L,{\rm ST})$&$Q_T(L,{\rm MV})$&$Q_T(H,{\rm ST})$&$Q_T(H,{\rm MV})$\\
 \hline
$p_0=0.01$  &-4.41       & -4.41 &-3.24 & -3.58\\
$p_0=0.5$ &  -4.56    & -4.36 & -3.45 & -3.45 \\
$p_0=0.99$  &-4.87& -4.69 & -3.78& -3.78 \\\hline
\end{tabular}
\end{table}

Note the time inconsistency here: with different initial population distribution $\mu_0$, Q table will converge to different values. The culprit:  with this form of Q function, the state space and the action space are not rich enough to ensure the DPP or the Bellman equation for \eqref{wrong_bellman}. %{\color{red}{In other words, the action space is not rich to find optimal policy for MFC with learning}}.

\subsection{{IQ} function for MFCs with learning}

Example \ref{wrong_Q}  indicates the wrong form of the Q function for MFCs with learning.
Therefore, our first step is to define an appropriate Q function.
The question is, what is wrong with the previous one?

First, recall that
MFC as a cooperative game is essentially an auxiliary control problem:
instead of maximizing reward for each individual agent, the objective in MFC  is to maximize the collective reward from  the perspective of the  auxiliary controller.  The  auxiliary controller's  value function  depends  on the probability distribution of the state  $\mu$. Therefore, the  Q function for MFCs  should  be dependent on $\mu$ instead of $s$.

Secondly, Lemma \ref{lem: flowmut} suggests that  once a control ${\pi} \in \Pi$ is given, the dynamics of the state distribution is determined by $\mu_{t+1}=\Phi(\mu_t,h_t)$, which is a {\it deterministic process} through $h_t$ in $\Pc(\Sc)$. Therefore,  an appropriate  Q function should be a  function on  $\Hc$, rather than of the single action in $\A$ or a probability distribution on $\A$. In other words,  the learning problem for MFCs should be recast as control problems with the probability measure space as the new state-action space such that 
\begin{equation}\label{mfc_objective_1}
v(\mu) := \sup_{\pi\in\Pi}\sum_{t=0}^{\infty} \gamma^{t} \E\big[\widehat r(\mu_t, \pi_t(\mu_t))\big]
\end{equation}
\vspace{-0.3em}
subject to
\vspace{-0.3em}
\begin{equation}\label{mfc_dynamics_2}
\mu_{t + 1}=\Phi(\mu_t, \pi_t(\mu_t)), \; t \in \N \cup \{0\}, \;\;\; \mu_0=\mu.
\end{equation}

Accordingly,  the appropriate Q function for  MFCs with learning should be defined by 
``lifting'' the classical Q function in RL, with lifting in the sense of replacing the
 state space $\Sc$ and action space $\A$ by the
 state space $\Pc(\Sc)$ and action space $\Hc$ respectively.
Hence, the proper Q function for MFCs with learning should take the following {\it integral} form, called, integrated Q (IQ) function.

\begin{Definition}[\underline{IQ Function}]
Given the framework of MFC with learning in \reff{eq:vpidef}-\reff{equ: defv}, the {IQ} function is a measurable real-valued function defined on $\Pc(\Sc) \times \Hc$ such that
\beq\label{eq:Q-MKV} 
Q(\mu, h) &=& \sup_{{\pi}\in\Pi_1} Q^{{\pi}}(\mu, h), \;\;\; \text{for any}\; \mu \in \Pc(\Sc), \; h \in \Hc,
\enq
\vspace{-1em}
with
\vspace{-1em}
\beqs
Q^{{\pi}}(\mu, h) &=& \E^{\pi}\biggl[\sum_{t=0}^\infty \gamma^{t} r(s_t, \mu_t, a_t, \nu_t) \biggl| s_0\sim\mu, a_0\sim h(s_0), a_t \sim \pi_t(s_t, \mu_t), t \in \N \biggl].
\enqs
\end{Definition}

%From the above discussions, we call Q function in \reff{equ:BellmanQ} integrated Q function.
\subsection{DPP: Necessary for {IQ} Function}

The above specification of the {IQ} function %suggests that MFCs with learning can be viewed as lifting the classical RLs by replacing  the
% state-action space with its probability distribution space. This view 
enables us to  establish the  DPP for MFCs with learning, 
in the form of the following Bellman equation.
\begin{Theorem}(DPP for {IQ} function) \label{thm: BellmanQ}
%\begin{description}
Under Outstanding Assumption (A), for any $\mu$ $\in$ $\Pc(\Sc)$ and $h$ $\in$ $\Hc$,
\beq \label{equ:BellmanQ}
Q(\mu, h) &=& \widehat r(\mu, h) + \gamma \sup_{h'\in \Hc} Q(\Phi(\mu, h), h').
\enq
\end{Theorem}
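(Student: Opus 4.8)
The plan is to prove the Bellman equation \reff{equ:BellmanQ} by establishing the two inequalities separately, exploiting the reformulation of MFC-with-learning as a deterministic control problem on the lifted state-action space $\Pc(\Sc) \times \Hc$, which was prepared by Lemma \ref{lem: flowmut} and Lemma \ref{lemma1}. The key preliminary observation is that, by Lemma \ref{lemma1}, the conditional accumulated reward $Q^\pi(\mu,h)$ can be rewritten in purely distributional terms: once the initial local policy $h$ is applied at time $0$ and a control $\pi \in \Pi_1$ is applied afterwards, the state distribution flow is the deterministic sequence $\mu_0 = \mu$, $\mu_1 = \Phi(\mu,h)$, $\mu_{t+1} = \Phi(\mu_t, \pi_t(\mu_t))$ for $t \ge 1$, and $Q^\pi(\mu,h) = \widehat r(\mu,h) + \gamma \sum_{t=1}^\infty \gamma^{t-1}\widehat r(\mu_t, \pi_t(\mu_t))$. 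Taking the supremum over $\pi \in \Pi_1$ and recognizing that the tail sum is exactly a value-function-type object started from $\mu_1 = \Phi(\mu,h)$, one sees that $Q(\mu,h) = \widehat r(\mu,h) + \gamma\, v(\Phi(\mu,h))$, where $v$ is the lifted value function from \eqref{mfc_objective_1}. So the theorem reduces to the identity $v(\mu') = \sup_{h' \in \Hc} Q(\mu', h')$ together with the DPP for $v$ at one step; equivalently, I will show directly $v(\Phi(\mu,h)) = \sup_{h'\in\Hc} Q(\Phi(\mu,h),h')$ and chain it.

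For the inequality $Q(\mu,h) \le \widehat r(\mu,h) + \gamma \sup_{h'\in\Hc} Q(\Phi(\mu,h),h')$, I would take an arbitrary $\pi = (\pi_1, \pi_2, \dots) \in \Pi_1$, split off the time-$0$ contribution $\widehat r(\mu,h)$, and observe that the shifted control $\pi' := (\pi_2, \pi_3, \dots)$ belongs to $\Pi_1$ (after reindexing, using time-homogeneity of the dynamics $\Phi$ and of $\widehat r$), so that $\sum_{t=1}^\infty \gamma^{t-1}\widehat r(\mu_t,\pi_t(\mu_t)) = Q^{\pi'}(\Phi(\mu,h), \pi_1(\Phi(\mu,h))) \le \sup_{h'} Q(\Phi(\mu,h),h')$; taking the supremum over $\pi$ gives the bound. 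For the reverse inequality, given $\varepsilon > 0$ I would pick $h'_\varepsilon \in \Hc$ and $\pi'_\varepsilon \in \Pi_1$ that are $\varepsilon$-optimal for $\sup_{h'} Q(\Phi(\mu,h),h')$, then concatenate: define $\pi$ on $\Pi_1$ by $\pi_1 := h'_\varepsilon$ (viewed as a measure-valued map ignoring the further distribution argument, i.e. constant on the relevant $\mu_1$) and $\pi_{t} := (\pi'_\varepsilon)_{t-1}$ for $t \ge 2$; this $\pi$ is admissible and yields $Q^\pi(\mu,h) = \widehat r(\mu,h) + \gamma Q^{\pi'_\varepsilon}(\Phi(\mu,h), h'_\varepsilon) \ge \widehat r(\mu,h) + \gamma(\sup_{h'}Q(\Phi(\mu,h),h') - \varepsilon)$, and letting $\varepsilon \downarrow 0$ finishes. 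Outstanding Assumption (A) is used throughout to guarantee that all the series converge absolutely, so that splitting off the first term and passing limits in $\varepsilon$ are legitimate; I would invoke dominated-convergence / tail-summability bounds from (A) at each such step.

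The main obstacle I anticipate is the measurability and admissibility bookkeeping in the concatenation argument for the reverse inequality: one must check that the spliced control $\pi$ is genuinely an element of $\Pi_1$ — i.e. each $\pi_t$ is a measurable map $\Sc \times \Pc(\Sc) \to \Pc(\A)$ lying in the admissible class and that Outstanding Assumption (A) is preserved — and, more subtly, that replacing a local policy $h' \in \Hc$ (a function of the state only) by a feedback control $\pi_t(\cdot,\cdot) \in \Pi_1$ is consistent, since along the realized deterministic flow $\{\mu_t\}$ the distributional argument is pinned down and $\pi_t(\cdot,\mu_t)$ can be identified with an element of $\Hc$. A clean way to handle this is to first prove the auxiliary identity $v(\mu') = \sup_{h'\in\Hc} Q(\mu',h')$ for every $\mu' \in \Pc(\Sc)$ (which itself is essentially a one-line consequence of the definitions, since a control $\pi \in \Pi$ is the same data as a choice of $h' = \pi_0(\mu') \in \Hc$ followed by $\pi|_{t\ge1} \in \Pi_1$), and then prove the standard one-step DPP for the deterministic lifted value function $v$, namely $v(\mu') = \sup_{h'\in\Hc}\{\widehat r(\mu',h') + \gamma v(\Phi(\mu',h'))\}$, whose proof is the textbook deterministic-DPP argument and avoids any probabilistic subtlety. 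Combining these two facts with the relation $Q(\mu,h) = \widehat r(\mu,h) + \gamma v(\Phi(\mu,h))$ derived in the first paragraph immediately yields \reff{equ:BellmanQ}. I would also note in passing that no regularity of $\widehat r$ or $\Phi$ beyond measurability is needed because the flow is deterministic and the optimization over $\Hc$ is handled via $\varepsilon$-optimal selections rather than attainment.
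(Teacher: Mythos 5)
Your proposal is correct and follows essentially the same route as the paper's own proof: you rewrite $Q^{\pi}(\mu,h)=\widehat r(\mu,h)+\gamma\,v^{\pi}(\Phi(\mu,h))$ via the deterministic flow from Lemma \ref{lem: flowmut} and the law-invariance of Lemma \ref{lemma1}, establish the key identity $v(\mu')=\sup_{h'\in\Hc}Q(\mu',h')$ by the split-off/$\varepsilon$-optimal concatenation argument (exactly the paper's \reff{equ: relationvQ}), and then chain the two to get \reff{equ:BellmanQ}. Your extra appeal to a separate one-step DPP for $v$ is redundant but harmless, and your admissibility bookkeeping (embedding $h'\in\Hc$ as a feedback control constant in the measure argument) matches the paper's construction of $\tilde\pi$.
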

 {The idea for the proof of Theorem \ref{thm: BellmanQ} is borrowed from Theorem 3.1 in \cite{PW2016}. Unlike \cite{PW2016}, which considers the value function for MFC with strict controls, we consider the IQ function for MFC with learning over an infinite time horizon with a stochastic reward function and with relaxed controls. For completeness, we highlight the key step for the proof of Theorem \ref{thm: BellmanQ}.}

\noindent \begin{proof}{Proof of Theorem \ref{thm: BellmanQ}.}
To start,  fix some arbitrary $\mu$ $\in$ $\Pc(\Sc)$ and ${\pi}$ $\in$ $\Pi$, we have
\beq \label{DPPvpi}
v^{\pi}(\mu) &=& \widehat r(\mu, \pi_0(\mu)) + \sum_{t=1}^\infty \gamma^t \widehat r(\mu_t, \pi_t(\mu_t)) \nonumber\\
&=& \widehat r(\mu, \pi_0(\mu)) + \gamma v^{\pi_{-}}(\Phi(\mu, \pi_0(\mu))), \nonumber\\
&=& Q^{\pi_{-}}(\mu, \pi_0(\mu)),
\enq
where ${\pi}_{-}$ $:=$ $\{\pi_t\}_{t=1}^\infty$ $\in \Pi_1$, and the second equality uses the flow property of {$\{\mu_t\}_{t=0}^\infty$} from Lemma \ref{lem: flowmut}.

Now we can  establish the following relation between the value function $v$ and the {IQ} function, 
\beq \label{equ: relationvQ}
v(\mu) &=& \sup_{h \in \Hc} Q(\mu, h), \;\;\; \text{for any} \; \mu \in \Pc(\Sc).
\enq
To prove \reff{equ: relationvQ}, we first show $v(\mu) \le \sup_{h \in \Hc} Q(\mu, h)$ for any $\mu \in \Pc(\Sc)$. To see this, note
\beq \label{proofequ: lhs}
v^{\pi}(\mu) &=& Q^{\pi_{-}}(\mu, \pi_0(\mu)) \;\leq\; Q(\mu, \pi_0(\mu))  \; \leq \; \sup_{h \in \Hc} Q(\mu, h),
\enq
where the first inequality is by  definition of $Q(\mu, h)$, and by the fact that $\pi_0(\mu)$ $\in$ $\Hc$ for each $\mu$ $\in$ $\Hc$.
Taking supremum over all policies ${\pi}$ $\in$ $\Pi$ in \reff{proofequ: lhs} shows that
\beq \label{equLHSless}
v(\mu) &\leq & \sup_{h \in \Hc} Q(\mu, h).
\enq
To see for any $\mu \in \Pc(\Sc)$, $v(\mu) \ge  \sup_{h \in \Hc} Q(\mu, h)$, fix any arbitrary $\mu$ $\in$ $\Pc(\Sc)$ and $\pi_0(\mu)$ $\in$ $\Hc$, for any $\epsilon$ $>$ $0$, there exists ${\pi}^\epsilon$ $=$ $\{\pi^\epsilon_t\}_{t=1}^\infty$ $\in \Pi_1$ such that
\beq \label{vpiepsilon}
Q^{{\pi}^\epsilon}(\mu, \pi_0(\mu)) \geq Q(\mu, \pi_0(\mu)) - \epsilon.
\enq
Now define $\tilde {\pi}$ $=$ $\{\tilde\pi_t\}_{t=0}^\infty $ $\in$ $\Pi$ by $\tilde\pi_t = \pi_0 1_{\{t=0\}} + \pi^\epsilon_t 1_{\{t \in \N\}},$
%Notice that this construction of $\tilde\pi$ motivates us to consider non-stationary policies besides stationary policies.
then from \reff{DPPvpi} and \reff{vpiepsilon},
\beq \label{proofequ: rhs}
v(\mu) &\geq& v^{\tilde{\pi}}(\mu)\;=\; Q^{\pi^\epsilon}(\mu, \pi_0(\mu))
\geq Q(\mu, \pi_0(\mu)) - \epsilon.
\enq
Taking supremum over all $\pi_0$ in \reff{proofequ: rhs}, we obtain
\beqs
v(\mu) & \geq & \sup_{\pi_0} Q(\mu, \pi_0(\mu)) - \epsilon \;=\; \sup_{h \in \Hc} Q(\mu, h) -\epsilon.
\enqs
Since the above inequality holds for any $\epsilon$ $>$ $0$,
\beq \label{equLHSmore}
v(\mu) & \geq &  \sup_{h \in \Hc} Q(\mu, h).
\enq
\reff{equ: relationvQ} follows from \reff{equLHSless} and \reff{equLHSmore}.\\
Now we are ready to prove \reff{equ:BellmanQ}.
%Note that for any $\mu$ $\in$ $\Pc(\Sc)$ and $\pi$ $\in$ $\Pi_t$, $v_t^\pi(\mu)$ $=$ $Q^\pi_t(\mu, \pi_t(\mu))$. Maximizing on both sides implies
%Note that $Q(\mu, h)$ $=$ $\sup_{\pi \in \Pi_{t + 1}} Q_t^{\pi}(\mu, h)$ for any $t$ $\in$ $\N$. Without loss of  generality take $t$ $=$ $1$, then
\beqs
Q(\mu, h) &=& \sup_{\pi \in\Pi_1}\E^{\pi}\biggl[\sum_{t=0}^\infty \gamma^t r(s_t, \mu_t, a_t, \nu_t) \biggl| s_0\sim\mu, a_0\sim h(s_0), a_t \sim \pi_t(s_t, \mu_t), t \in \N\biggl] \nonumber\\
&=& \sup_{\pi \in\Pi_1} \big[\widehat r(\mu, h) + \gamma v^{\pi}(\Phi(\mu, h))\big] \nonumber\\
&=& \widehat r(\mu, h) + \gamma v(\Phi(\mu, h)) \nonumber\\
&=& \widehat r(\mu, h) + \gamma \sup_{h'\in \Hc} Q(\Phi(\mu, h), h'),
\enqs
where the second equality is from the flow property of $\{\mu_t\}_{t=0}^\infty$ in Lemma \ref{lem: flowmut}, the third equality is by the definition of the value function, and the last inequality is from \reff{equ: relationvQ}.
%The fourth equality above is the Bellman equation for the Q function.
\end{proof}

\subsection{DPP: Sufficient for {IQ} Function}
So far, we have established the necessary condition for the Bellman equation. That is, the {IQ} function satisfies the Bellman equation and is time consistent.
We can further establish that this Bellman equation is sufficient, in the form of the following verification theorem.
\begin{Theorem}(Verification theorem) \label{theoremverification}
\begin{itemize}
\item [(1).] Suppose  $\tilde Q: \Pc(\Sc) \times \Hc \to\R$ satisfies the Bellman equation
 \reff{equ:BellmanQ} for any $(\mu, h)$ $\in$ $\Pc(\Sc)$ $\times$ $\Hc$. Suppose that for every $\mu$ $\in$ $\Pc(\Sc)$, one can also find a stationary control $\pi^*(\mu)$ $\in$ $\Hc$ that achieves $\sup_{h \in \Hc}\tilde Q(\mu, h)$ , then $\pi^*$ is an optimal stationary control of problem \reff{eq:vpidef}-\reff{equ: defv}. 
 \item [(2).] If we further assume that there exists $0\leq r_{\text{max}}<\infty$ such that  the sup norm $||r||_\infty\leq r_{\text{max}}, a.s.$,    then Q defined in \eqref{eq:Q-MKV} is the unique solution in $\{q\in \Mc(\Pc(\Sc) \times \Hc):\|q\|_\infty\leq V_{\max}\}$ for the Bellman equation \eqref{equ:BellmanQ}, with $V_{\max}:=\frac{r_{\text{max}}}{1-\gamma}$.  In this case, the stationary control  $\pi^*(\mu)$ $\in$ $\Hc$ that achieves $\sup_{h \in \Hc}\tilde Q(\mu, h)$ is an optimal stationary control of problem \reff{eq:vpidef}-\reff{equ: defv}. 
 \end{itemize}
\end{Theorem}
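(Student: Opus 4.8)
The plan is to prove the two parts of the verification theorem by exploiting the contraction-like structure of the Bellman operator together with the characterization $v(\mu) = \sup_{h\in\Hc} Q(\mu,h)$ established in the proof of Theorem \ref{thm: BellmanQ}. For part (1), I would take $\tilde Q$ satisfying \reff{equ:BellmanQ} and the stationary control $\pi^*$ with $\tilde Q(\mu, \pi^*(\mu)) = \sup_{h\in\Hc}\tilde Q(\mu,h) =: \tilde v(\mu)$ for every $\mu$. The first step is to iterate the Bellman equation along the deterministic flow $\mu_{t+1} = \Phi(\mu_t, \pi^*(\mu_t))$: unrolling \reff{equ:BellmanQ} $n$ times gives $\tilde v(\mu) = \sum_{t=0}^{n-1}\gamma^t \widehat r(\mu_t, \pi^*(\mu_t)) + \gamma^n \tilde v(\mu_n)$. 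Under Outstanding Assumption (A) the tail $\gamma^n\tilde v(\mu_n)$ must be shown to vanish as $n\to\infty$ — this is where one needs the summability of the reward (in part (1) one may need to impose or invoke that $\tilde v$ itself is suitably bounded, or argue from (A) that the partial sums converge and the remainder is controlled); passing to the limit then yields $\tilde v(\mu) = v^{\pi^*}(\mu)$ by \reff{equ:reformv}, so $\pi^*$ attains $\tilde v$. The second step is to show $\tilde v = v$: for any admissible $\pi\in\Pi$, using $\tilde Q(\mu,h)\ge \widehat r(\mu,h) + \gamma\tilde v(\Phi(\mu,h))$ (equality, in fact) one runs the same telescoping argument along the flow generated by $\pi$ to get $\tilde v(\mu)\ge v^\pi(\mu)$, hence $\tilde v(\mu)\ge V(\mu) = v(\mu)$; combined with $\tilde v(\mu) = v^{\pi^*}(\mu)\le v(\mu)$ this forces $\tilde v = v$ and $v^{\pi^*}(\mu) = v(\mu)$, i.e. $\pi^*$ is optimal.

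For part (2), with the uniform bound $\|r_t\|_\infty\le r_{\max}$ one has $\|Q\|_\infty\le V_{\max} = r_{\max}/(1-\gamma)$, so $Q$ lies in the claimed set and, by Theorem \ref{thm: BellmanQ}, solves \reff{equ:BellmanQ}. The plan for uniqueness is a standard contraction-mapping argument on $(\{q\in\Mc(\Pc(\Sc)\times\Hc):\|q\|_\infty\le V_{\max}\}, \|\cdot\|_\infty)$. Define the operator $\mathcal{T}q(\mu,h) := \widehat r(\mu,h) + \gamma\sup_{h'\in\Hc} q(\Phi(\mu,h),h')$. First I check $\mathcal{T}$ maps the ball into itself: $|\widehat r(\mu,h)|\le r_{\max}$ so $\|\mathcal{T}q\|_\infty\le r_{\max} + \gamma V_{\max} = V_{\max}$, and measurability of $\mathcal{T}q$ follows from measurability of $\widehat r$, $\Phi$, and a measurable-selection/measurability-of-sup argument (the sup over $\Hc$ of a jointly measurable function). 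Then for $q_1,q_2$ in the ball, $|\mathcal{T}q_1(\mu,h) - \mathcal{T}q_2(\mu,h)| \le \gamma\,|\sup_{h'} q_1(\Phi(\mu,h),h') - \sup_{h'} q_2(\Phi(\mu,h),h')| \le \gamma\|q_1 - q_2\|_\infty$, using the elementary inequality $|\sup_x f(x) - \sup_x g(x)|\le \sup_x|f(x)-g(x)|$. Since $\gamma\in(0,1)$, $\mathcal{T}$ is a contraction on a complete metric space (the ball is closed in sup norm), so Banach's fixed point theorem gives a unique fixed point, which must be $Q$. Finally, the existence of a maximizing stationary control $\pi^*$ together with part (1) (now applicable since all quantities are bounded, so the tail condition $\gamma^n\tilde v(\mu_n)\to 0$ is immediate) yields optimality of $\pi^*$.

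The main obstacle I anticipate is the measurability bookkeeping: showing that $\mu\mapsto \sup_{h\in\Hc} q(\mu,h)$ is a measurable function on $\Pc(\Sc)$ and that a measurable maximizing selection $\pi^*:\Pc(\Sc)\to\Hc$ exists (for the statement "one can also find a stationary control $\pi^*(\mu)\in\Hc$" to be meaningfully usable in constructing an admissible $\pi^*\in\Pi$). This requires either invoking a measurable selection theorem (e.g. Kuratowski–Ryll-Nardzewski) on the space $\Hc$ — which itself carries a topology/$\sigma$-field that must be pinned down — or arguing that $\Hc$ is, for the purposes here, a nice enough (Polish / analytic) space. The cleanest route is probably to note that the problem only asks to verify that a given such $\pi^*$ is optimal, so I can take the measurable selection as a hypothesis and concentrate the work on the telescoping/limit argument and the contraction. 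A secondary, more routine, technical point is justifying the exchange of limit and sum in $\gamma^n v(\mu_n)\to 0$, which in part (2) is trivial from boundedness and in part (1) follows from dominated convergence using Outstanding Assumption (A).
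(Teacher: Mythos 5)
Your proposal is correct and follows essentially the same route as the paper: part (1) is the same telescoping of the Bellman equation along an arbitrary control (yielding $\tilde Q(\mu,\pi_0(\mu))\ge v^{\pi}(\mu)$) and along $\pi^*$ (with equality, yielding $\sup_{h\in\Hc}\tilde Q(\mu,h)=v^{\pi^*}(\mu)$), and part (2) is the same contraction argument for the Bellman operator on the sup-norm ball $\{q:\|q\|_\infty\le V_{\max}\}$ via Banach's fixed point theorem. The tail condition $\gamma^{T}\tilde Q(\mu_T,\pi_T(\mu_T))\to 0$ that you flag as needing boundedness of $\tilde Q$ is also used (essentially as an implicit assumption) in the paper's proof of part (1), so your concern matches rather than diverges from the paper's treatment.
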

{The idea for the proof of Theorem \ref{theoremverification}-(1) is borrowed from the proof for Theorem 3.2 in \cite{PW2016}. Nevertheless, the backward induction argument by \cite{PW2016} for  a finite-time-horizon case needs appropriate modification for the IQ function over an infinite time horizon.
We hence highlight the key step here.}
%the value function on finite time horizon with terminal condition) cannot be directly applied to the IQ function over infinite horizon, we provide below a short proof for Theorem \ref{theoremverification}-(1) for the completeness.}

\noindent \begin{proof}{Proof of Theorem \ref{theoremverification}.} (1) {
On  one hand, given any $\mu$ $\in$ $\Pc(\Sc)$, for any given control ${\pi}$ $\in$ $\Pi$, the evolution of $\{\mu_t\}_{t=0}^\infty $ is given by \reff{eqv:flowmut}. From \reff{equ:BellmanQ}
\begin{eqnarray}
\tilde Q (\mu_t, \pi_t(\mu_t)) &\geq& \widehat r(\mu_t, \pi_t(\mu_t)) + \gamma \tilde Q(\mu_{t +1}, \pi_{t + 1}(\mu_{t + 1})), \;\;\; t \in \N \cup \{0\}.\label{eq:oneside}
\end{eqnarray}
Multiplying \eqref{eq:oneside} by $\gamma^{t}$ 
%and rearranging yield
%\beqs
%\gamma^{t} \tilde Q(\mu_t, \pi_t(\mu_t)) - \gamma^{t + 1} \tilde Q(\mu_{t + 1}, \pi_t(\mu_{t +1})) & \geq & \gamma^t \widehat r(\mu_t, \pi_t(\mu_t)).
%\enqs
and summing over $0$ $\leq$ $t$ $\leq$ $T-1$ for any fixed $T$, we obtain
\beqs
\tilde Q(\mu, \pi_0(\mu)) -\gamma^T \tilde Q(\mu_T, \pi_T(\mu_T)) &\geq & \sum_{t=0}^{T-1} \gamma^t \widehat r(\mu_t, \pi_t(\mu_t)).
\enqs
As $\lim_{T \to \infty} \gamma^T \tilde Q(\mu, h)$ $=$ $0$ for any fixed $(\mu, h) \in \Pc(\Sc) \times \Hc$, by taking the limit $T$ $\to$ $\infty$,
$
\tilde Q(\mu, \pi_0(\mu)) \geq \sum_{t=0}^\infty \gamma^t \widehat r(\mu_t, \pi_t(\mu_t)) = v^{\pi}(\mu),$ which leads to
 $\sup_{h \in \Hc} \tilde Q(\mu, h) \geq v(\mu).$

On the other hand, since $\pi^*(\mu)$ $\in$ $\arg\max \tilde Q(\mu, h)$ holds for every $\mu$ $\in$ $\Pc(\Sc)$, then 
\beqs
\tilde Q(\mu_t, \pi^*(\mu_t)) &=& \widehat r(\mu_t, \pi^*(\mu_t)) + \gamma \tilde Q(\mu_{t + 1}, \pi^*(\mu_{t + 1})).
\enqs
Repeat the same argument for  $\pi^*$ as for $\pi$, $\sup_{h \in \Hc} \tilde Q(\mu, h) = \tilde Q(\mu, \pi^*(\mu)) = v^{\pi^*}(\mu)$,
which shows that $\pi^*$ is an optimal stationary control.}

(2) First, since $||r||_\infty\leq r_{\text{max}}$ a.s., for any $\mu\in\Pc(\Sc)$ and $h\in\Hc$, the aggregated reward function \reff{hatr} satisfies 
$$|\widehat r(\mu, h)|\leq r_\text{max}\cdot\int_{s \in \Sc} \mu(ds)\int_{a \in \A} h(s)(da)=r_\text{max}.$$
In this case, for any $\mu\in\Pc(\Sc)$ and $h\in\Hc$, $|Q(\mu, h)|\leq r_{\text{max}}\cdot\sum_{t=0}^\infty \gamma^t=V_{\max}$. Hence, $Q\in\{q\in \Mc(\Pc(\Sc) \times \Hc):\|q\|_\infty\leq V_{\max}\}$ and  it satisfies the Bellman equation \eqref{equ:BellmanQ}.
    
    To see that Q is the unique function in $\{q\in \Mc(\Pc(\Sc) \times \Hc):\|q\|_\infty\leq V_{\max}\}$ satisfying \eqref{equ:BellmanQ}, consider the Bellman operator $B:\Mc(\Pc(\Sc) \times \Hc) \to \Mc(\Pc(\Sc) \times \Hc)$ defined by 
\beq \label{Bellmanoperator}
({B}\,q)(\mu, h)={\widehat r}(\mu, h) + \gamma \sup_{\tilde{h}\in\Hc}q({\Phi}(\mu, h),\tilde{h}).
\enq
Then $B$ is a contraction operator on $\{q\in \Mc(\Pc(\Sc) \times \Hc):\|q\|_\infty\leq V_{\max}\}$:  clearly $B$ maps $\{q\in \Mc(\Pc(\Sc) \times \Hc):\|q\|_\infty\leq V_{\max}\}$ to itself, and for any $(\mu,h)\in \Pc(\Sc) \times \Hc$,
    \begin{align*}
        |B q_1(\mu,h)-B q_2(\mu,h)| \leq \gamma \sup_{\tilde{h}\in\Hc} |q_1(\Phi(\mu,h),\tilde{h})-q_2(\Phi(\mu,h),\tilde{h})| \leq \gamma \|q_1-q_2\|_\infty.
    \end{align*}
    Thus, $\|B q_1-B q_2\|_\infty\leq\gamma \|q_1-q_2\|_\infty$. Therefore, $B$ is a contraction mapping with modulus $\gamma<1$ under the sup norm on $\{q\in \Mc(\Pc(\Sc) \times \Hc):\|q\|_\infty\leq V_{\max}\}$. Hence the uniqueness by {{the contraction property}}. %Banach fixed point Theorem.
%(3). Note that under the assumption of the statement, \eqref{equ:BellmanQ} can be rewritten as $Q=B^{\pi^*}Q$, where the operator $B^{\pi^*}$ is defined to be $B^{\pi^*}q(\mu,h)=r(\mu,h)+\gamma q(\Phi(\mu,h),\pi^*(\Phi(\mu,h)))$ on $\{f\in \Bc(\Pc(\Sc) \times \Hc):\|f\|_\infty\leq V_{\max}\}$. (Here we omit the subscription $t$ since $\pi^*$ is stationary.) Moreover, using the same argument as before, one can show that this operator is a contraction with modulus $\gamma$ under the sup norm, and $Q^{\pi^*}$, the Q function under policy ${\pi^*}$, is the unique fixed point of $B^{\pi^*}$ on $\{f\in \Bc(\Pc(\Sc) \times \Hc):\|f\|_\infty\leq V_{\max}\}$. This proves $Q^{\pi^*}=Q, v^{\pi^*}=v$.
\end{proof}
%In this section, we consider infinite-horizon case and want to show the Bellman equation for state-value function in two ways: the first is direct approach shown in Proposition \ref{prop:Bellmanv1st}.
\iffalse
\begin{Remark}
The proof of Theorem \ref{thm: BellmanQ} and Theorem 3.2 (1) is motivated by but different from that for Theorem 3.1 and Theorem 3.2 in \cite{PW2016}. Unlike \cite{PW2016}, which considers the value function for MFC with strict controls, we consider IQ function for MFC with learning over an infinite time horizon with stochastic reward function and relaxed controls. %After reformulating MFC \reff{eq:vpidef}-\reff{equ: defv}, it becomes a deterministic standard control problem defined on the space of probability measures \reff{mfc_objective_1}-\reff{mfc_dynamics_2}. As long as we have topological property of the space of probability measures, the proof is standard and does not require measurable selection argument. For sake of completeness, we present it here. 
So backward induction argument for value function on finite horizon in Theorem 3.2 of \cite{PW2016} cannot be applied to our case directly.
\end{Remark}
\fi
\subsection{{IQ} function vs classical Q function.}\label{I-remark}

 Comparing {IQ} function  and the classical Q function, there is an analytical connection between their respective Bellman equations.
 
  To see this,  consider the simplest problem of MFCs with learning where there are no state distribution nor action distribution in the probability transition function $P$ or in the deterministic reward function $r$.   Assume $\Sc$ and $\A$ are finite so that there exists $ r_{\text{max}}>0$ such that $\|r\|_\infty \leq  r_{\text{max}}$. Here for clarity, we shall distinguish the classical single-agent Q function \reff{defsingleQ} and the {IQ} function \reff{eq:Q-MKV} by writing  $Q_{\rm single}$ and $Q_{\rm mfc}$ respectively. 
  Then we see that the {IQ} function in \reff{eq:Q-MKV} is the integral of Q function in \reff{defsingleQ} such that 
\beq \label{equ:standardQ}
Q_{\rm mfc}(\mu, h) &=& \sum_{s \in \Sc} \mu(s)\sum_{a \in \A}  Q_{\rm single}(s, a)h(s)(a).
\enq
%where $\bar Q(s, a)$ satisfies the Bellman equation for standard MDP:
%\beq \label{equ: classicalBellmanQ*}
%\bar Q(s, a) &=& r(s, a) + \gamma \sum_{s' \in \Sc} P(s, a, s') \max_{a' \in \A} \bar Q(s', a').
%\enq

To see this connection, define  
$$\tilde Q(\mu, h)=\sum_{s \in \Sc} \mu(s)\sum_{a \in \A}  Q_{\rm single}(s,a)h(s)(a).$$ 
Note that $\tilde Q$ is linear in $\mu$ and $h$. From the Bellman equation \reff{equ: singleQ} of $Q_{\rm single}$ \reff{defsingleQ}, we have
\beqs
\tilde Q(\mu, h) &=&  \widehat r(\mu, h) + \gamma \sum_{s \in \Sc}\mu(s)\sum_{a \in \A}h(s)(a)\sum_{s' \in \Sc} P(s, a)(s')\max_{a' \in \A}  Q_{\rm single}(s', a'),
\enqs
then we can see that
\beqs \label{equ: Qphi}
 \sum_{s \in \Sc}\mu(s)\sum_{a \in \A}h(s)(a)\sum_{s' \in \Sc} P(s, a)(s')\max_{a' \in \A}  Q_{\rm single}(s', a') &=& \sup_{h' \in \Hc}\tilde Q(\Phi(\mu, h), h').
\enqs
In fact, on one hand, for any $h'$ $\in$ $\Hc$,
\beqs
& &\sum_{s \in \Sc} \mu(s) \sum_{a \in \A} h(s)(a) \sum_{s' \in \Sc} P(s, a)(s') \max_{a' \in \A}  Q_{\rm single}(s', a')\\
= & & \sum_{s \in \Sc}\mu(s) \sum_{a \in \A} h(s)(a) \sum_{s' \in \Sc} P(s, a)(s') \sum_{\tilde a \in \A} h'(s')(\tilde a) \max_{a' \in \A} Q_{\rm single}(s', a')\\
\geq & & \sum_{s \in \Sc} \mu(s)\sum_{a \in \A} h(s)(a)\sum_{s' \in \Sc} P(s, a)(s') \sum_{\tilde a \in \A} h'(s')(\tilde a)  Q_{\rm single}(s', \tilde a)\\
= & & \sum_{s' \in \Sc} \Phi(\mu, h)(s') \sum_{\tilde a \in \A} h'(s')(\tilde a) Q_{\rm single}(s', \tilde a)\\
= & & \tilde Q(\Phi(\mu, h), h'),
\enqs
where the first equality is from $\sum_{\tilde a \in \A} h(s')(\tilde a)$ $=$ $1$, the second equality is by \reff{equ: Phi}, and the last equality is by the definition of $\tilde Q$.\\
On the other hand, if we take
\beqs
h'_*(s') &=& \left\{\begin{array}{ccl}
\frac{1}{\# \arg\max_{a' \in \A}  Q_{\rm single}(s', a')}, &\quad \mbox{if} \; a_*(s') \in \arg\max_{a' \in \A}  Q_{\rm single}(s', a')\\
0, & \mbox{otherwise},
\end{array}\right.
\enqs
with $\# \arg\max_{a' \in \A}  Q_{\rm single}(s', a')$ the number of elements in $\arg\max_{a' \in \A}  Q_{\rm single}(s', a')$,
then
\beqs
& & \sup_{h' \in \Hc} \tilde Q(\Phi(\mu, h), h') \geq \tilde Q(\Phi(\mu, h), h'_*)\\
= & & \sum_{s' \in \Sc} \Phi(\mu, h)(s') \sum_{a' \in \A} Q_{\rm single}(s', a') h'_*(s')(a')\\
= & & \sum_{s' \in \Sc} \Phi(\mu, h)(s') \max_{a' \in \A} Q_{\rm single}(s', a')\\
= & & \sum_{s \in \Sc}\mu(s)\sum_{a \in \A} h(s)(a) \sum_{s' \in \Sc} P(s, a)(s')\max_{a' \in \A} Q_{\rm single}(s', a').
\enqs
Therefore,
\beqs
\tilde Q(\mu, h) &=& \widehat r(\mu, h) +\gamma \sup_{h' \in \Hc} \tilde Q(\Phi(\mu, h), h').
\enqs
Since both $\tilde Q$ and $Q_{\rm mfc}$ satisfy Bellman equations \reff{equ:BellmanQ}, we have $Q_{\rm mfc}$ $=$ $\tilde Q$ from the uniqueness of the fixed point of a contraction mapping $B$ in \reff{Bellmanoperator}.
%from $\{f: \Bc(\Pc(\Sc) \times \Hc) \to \Bc(\Pc(\Sc) \times \Hc) : \|f\|_\infty \leq \frac{R}{1 -\gamma}\}$ into itself,  defined by $(B w)(\mu, h)$ $:=$ $\widehat r(\mu, h)$ $+$ $\gamma \sup_{h' \in \Hc} w(\Phi(\mu, h), h')$. 

\begin{Remark}
The relationship between $Q_{\rm mfc}$ and $Q_{\rm single}$ in \eqref{equ:standardQ} is intriguing for algorithmic designs: the ``global'' Q table $(Q_{\rm mfc})$ needs to be trained in a centralized manner by observing the population state distribution; yet agents only need to maintain a ``local'' Q table $(Q_{\rm single})$ for execution. 
\end{Remark}

%{\bf In this case, agents only need to maintain a ``local" Q table for execution.
%(Please rewrite this explanation}.

%%%%%%%%%%%%%%%%%%%%%%%%%%%%%%%%%%%%%%%%%%%%%%%%%%%%%%%%%%%%%%%%
\subsection{DPP for the Value Function and Value-iteration Algorithms}
For  model-based learning algorithms such as the  value iteration, we have the Bellman equation for the value function $v$ from Theorem \ref{thm: BellmanQ}.

\begin{Theorem} (DPP for value function) \label{cor: Bellmanv} Under Outstanding Assumption (A), the value function $v$ satisfies the Bellman equation
\beq \label{equ:Bellmanv1st}
v(\mu) &=& \sup_{h \in \Hc} \big\lbrace\widehat r(\mu, h) + \gamma v(\Phi(\mu, h))\big\rbrace, \;\;\; \text{for any} \; \mu \in \Pc(\Sc).
\enq
\end{Theorem}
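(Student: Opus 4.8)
The plan is to deduce \reff{equ:Bellmanv1st} directly from the two identities already established within the proof of Theorem \ref{thm: BellmanQ}. The first is the relation between the value function and the IQ function, namely \reff{equ: relationvQ}: $v(\mu) = \sup_{h \in \Hc} Q(\mu, h)$ for every $\mu \in \Pc(\Sc)$. The second is the one-step identity $Q(\mu, h) = \widehat r(\mu, h) + \gamma v(\Phi(\mu, h))$ for every $(\mu, h) \in \Pc(\Sc) \times \Hc$, which is precisely the content of the third equality in the closing display of that proof, and which is obtained from the flow property of $\{\mu_t\}_{t=0}^\infty$ in Lemma \ref{lem: flowmut}, the law-invariant representation \reff{equ:reformv}, and the definition \reff{equ: defv} of $v$ --- not from \reff{equ:Bellmanv1st} itself, so there is no circularity. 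Substituting the second identity into the first gives
\beqs
v(\mu) \;=\; \sup_{h \in \Hc} Q(\mu, h) \;=\; \sup_{h \in \Hc}\big\lbrace \widehat r(\mu, h) + \gamma v(\Phi(\mu, h)) \big\rbrace,
\enqs
which is exactly \reff{equ:Bellmanv1st}. Thus the theorem is an immediate corollary of Theorem \ref{thm: BellmanQ}.

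For completeness, I would also record the self-contained derivation that avoids the IQ function. Fix $\mu \in \Pc(\Sc)$. For the inequality $v(\mu) \le \sup_{h \in \Hc}\{\widehat r(\mu, h) + \gamma v(\Phi(\mu, h))\}$: for any $\pi \in \Pi$, the decomposition \reff{DPPvpi}, together with $v^{\pi_-} \le v$ on $\Pc(\Sc)$ (valid because $\pi_- \in \Pi_1$ and, by time-homogeneity of the infinite-horizon problem, $\sup_{\pi \in \Pi_1} v^\pi = v$) and the fact that $\pi_0(\mu) \in \Hc$, yields $v^\pi(\mu) = \widehat r(\mu, \pi_0(\mu)) + \gamma v^{\pi_-}(\Phi(\mu, \pi_0(\mu))) \le \sup_{h \in \Hc}\{\widehat r(\mu, h) + \gamma v(\Phi(\mu, h))\}$; taking the supremum over $\pi \in \Pi$ gives this bound. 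For the reverse inequality: fix $h \in \Hc$ and $\epsilon > 0$, choose $\pi^\epsilon = \{\pi^\epsilon_t\}_{t=1}^\infty \in \Pi_1$ with $v^{\pi^\epsilon}(\Phi(\mu, h)) \ge v(\Phi(\mu, h)) - \epsilon$, and splice $\tilde\pi_t = h\, 1_{\{t=0\}} + \pi^\epsilon_t\, 1_{\{t \in \N\}}$, which lies in $\Pi$; then \reff{DPPvpi} gives $v(\mu) \ge v^{\tilde\pi}(\mu) = \widehat r(\mu, h) + \gamma v^{\pi^\epsilon}(\Phi(\mu, h)) \ge \widehat r(\mu, h) + \gamma v(\Phi(\mu, h)) - \gamma\epsilon$; letting $\epsilon \downarrow 0$ and then taking the supremum over $h \in \Hc$ completes the argument.

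There is no genuine obstacle in this statement: the only points that require care are the admissibility and measurability of the spliced control $\tilde\pi$ and the identification $\sup_{\pi \in \Pi_1} v^\pi = v$ through time-homogeneity, both of which are already handled in the proof of Theorem \ref{thm: BellmanQ}. Outstanding Assumption (A) ensures that every series and integral appearing above converges absolutely, so all the rearrangements and limit interchanges are justified.
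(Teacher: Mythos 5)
Your first argument is exactly the paper's route: the paper offers no separate proof of this theorem, deriving it directly from Theorem \ref{thm: BellmanQ} via the relation \reff{equ: relationvQ} together with the intermediate identity $Q(\mu,h)=\widehat r(\mu,h)+\gamma v(\Phi(\mu,h))$ from that proof, precisely as you do. Your additional self-contained derivation is correct as well, but it simply re-runs the splicing and $\epsilon$-optimal-control argument already used in the paper's proof of \reff{equ: relationvQ}, so it adds nothing essentially new.
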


\medskip

Given $\pi: \Pc(\Sc) \to \Hc$, define the operator $T_\pi:$ $\Mc(\Pc(\Sc))$ $\to$ $\Mc(\Pc(\Sc))$ such that
\beq
\label{Tpiv}
(T_\pi w)(\mu) &:=& \widehat r (\mu, \pi(\mu)) + \gamma w (\Phi(\mu, \pi(\mu))),
\enq
and another operator $T:$ $\Mc(\Pc(\Sc))$ $\to$ $\Mc(\Pc(\Sc))$ such that
%$\Mc(\Pc(\Sc))$ $\to$ $\Mc(\Pc(\Sc))$, where $\Mc(\Pc(\Sc))$ is the set of all bounded functions over $\Pc(\Sc)$, endowed with sup norm $\|w\|_\infty$ $: =$ $\sup_{\mu \in \Pc(\Sc)} |w(\mu)|$ for any $w$ $\in$ $\Mc(\Pc(\Sc))$.
\beq\label{Tv}
(T w)(\mu) &:=& \sup_{h \in \Hc}\big\lbrace\widehat r(\mu, h) + \gamma w(\Phi(\mu, h))\big\rbrace,
%v(\mu) &=& \int_{s} \tilde v(s, \mu)\mu(ds)
\enq
where $\widehat r(\mu, h)$ and $\Phi(\mu, h)$ are given in \reff{hatr} and \reff{equ: Phi}.
\begin{Proposition} \label{prop: valueiteration}
 Assume without loss of generality $v_0$ $=$ $0$, then under Outstanding Assumption (A), we have for all $\mu$ $\in$ $\Pc(\Sc)$,
$$v(\mu)=\lim_{n \to \infty} (T^n v_0)(\mu),$$ where $T^n$ is $n$-th composition of $T$ such that  such that $T^n = \underbrace{T \circ \cdots \circ T}_{n}$.
\end{Proposition}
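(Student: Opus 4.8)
The plan is to recognise $T^{n}v_{0}$ as the optimal value of the $n$-step truncation of \reff{mfc_objective_1}--\reff{mfc_dynamics_2} and then to sandwich the limit between $v$ and $v$. Concretely, I would first prove by induction on $n$ that, for every $\mu\in\Pc(\Sc)$,
\[
(T^{n}v_{0})(\mu)=\sup_{\pi\in\Pi}\;\sum_{t=0}^{n-1}\gamma^{t}\,\widehat r\big(\mu_{t}^{0,\mu,\pi},\pi_{t}(\mu_{t}^{0,\mu,\pi})\big)=:\sup_{\pi\in\Pi}v_{n}^{\pi}(\mu).
\]
The case $n=1$ is immediate since $v_{0}=0$ and $\{\pi_{0}(\mu):\pi\in\Pi\}=\Hc$. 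For the inductive step one unfolds $T^{n+1}v_{0}=T(T^{n}v_{0})$ using \reff{Tv}, notes that $v_{n+1}^{\pi}(\mu)$ depends on $\pi_{0}$ only through $h:=\pi_{0}(\mu)\in\Hc$, and uses the flow identity $\mu_{1}^{0,\mu,\pi}=\Phi(\mu,h)$ from Lemma \ref{lem: flowmut} together with the fact that $\Pi$ is closed under time shifts to write $v_{n+1}^{\pi}(\mu)=\widehat r(\mu,h)+\gamma\,v_{n}^{\pi'}(\Phi(\mu,h))$ for a tail policy $\pi'\in\Pi$. Taking iterated suprema over $h\in\Hc$ and $\pi'\in\Pi$ then collapses back to $(T^{n+1}v_{0})(\mu)$; no measurable selection is needed, since the argument is merely a reparametrisation of $\Pi$ followed by suprema of real numbers.

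For the lower bound, I would use that Outstanding Assumption (A) together with the Fubini computation behind Lemma \ref{lemma1} gives $\sum_{t\ge0}\gamma^{t}\big|\widehat r(\mu_{t}^{0,\mu,\pi},\pi_{t}(\mu_{t}^{0,\mu,\pi}))\big|\le\E^{\pi}\big[\sum_{t\ge0}\gamma^{t}|r(s_{t},\mu_{t},a_{t})|\big]<\infty$, so the series in \reff{equ:reformv} converges absolutely and $v_{n}^{\pi}(\mu)\to v^{\pi}(\mu)$. Since $(T^{n}v_{0})(\mu)\ge v_{n}^{\pi}(\mu)$ for each $\pi$, letting $n\to\infty$ and then taking $\sup_{\pi}$ yields $\liminf_{n}(T^{n}v_{0})(\mu)\ge v(\mu)$. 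For the upper bound, from the identification above $(T^{n}v_{0})(\mu)\le v(\mu)+\varepsilon_{n}(\mu)$, where $\varepsilon_{n}(\mu):=\sup_{\pi}\sum_{t\ge n}\gamma^{t}\,\widehat{|r|}(\mu_{t}^{0,\mu,\pi},\pi_{t}(\mu_{t}^{0,\mu,\pi}))$ and $\widehat{|r|}$ denotes $\widehat r$ with $r$ replaced by $|r|$. Factoring out $\gamma^{n}$ and again identifying $\{\mu_{n+k}^{0,\mu,\pi}\}_{k\ge0}$ with the flow of a shifted admissible policy started from $\mu_{n}^{0,\mu,\pi}$, one gets $\varepsilon_{n}(\mu)\le\gamma^{n}\sup_{\mu'\in\Pc(\Sc)}M(\mu')$ with $M(\mu'):=\sup_{\pi}\E^{\pi}\big[\sum_{t\ge0}\gamma^{t}|r(s_{t},\mu_{t},a_{t})|\mid s_{0}\sim\mu'\big]$.

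The main obstacle is the \emph{uniform} bound $\sup_{\mu'}M(\mu')<\infty$, since Outstanding Assumption (A) only yields finiteness of $M(\mu')$ for each fixed $\mu'$. In every setting in which the proposition is used---$\|r_{t}\|_{\infty}\le r_{\max}$ a.s., or $\Sc$ and $\A$ finite, or under (A1)--(A3)---compactness of $\Sc$ and $\A$ forces $\widehat r$ (hence $\widehat{|r|}$) to be bounded by some $\bar r<\infty$, so $\sup_{\mu'}M(\mu')\le \bar r/(1-\gamma)$ and $\varepsilon_{n}(\mu)\to0$; combined with the lower bound this gives $\lim_{n}(T^{n}v_{0})(\mu)=v(\mu)$. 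In the bounded-reward case one can shortcut the whole argument: exactly as in the proof of Theorem \ref{theoremverification}(2), $T$ is a $\gamma$-contraction on $\{w\in\Mc(\Pc(\Sc)):\|w\|_{\infty}\le V_{\max}\}$, whose unique fixed point is $v$ by Theorem \ref{cor: Bellmanv}, and $v_{0}=0$ lies in this set, so $T^{n}v_{0}\to v$ uniformly and in particular pointwise.
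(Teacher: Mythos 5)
Your argument is, at its core, the same as the paper's: your inductive identification of $(T^n v_0)(\mu)$ with the optimal $n$-step truncated value is exactly the content of Lemma \ref{lemma: value_iteration} (equations \reff{equ: Tpinv0}--\reff{equ: TTpirelation}, which the paper phrases through the operators $T_{\pi_t}$ rather than a direct induction), and your sandwich of $v(\mu)$ between $(T^n v_0)(\mu)$ plus/minus the discounted tail is precisely the decomposition \reff{equ: vpi2part} and the two inequalities that follow it. The one place you genuinely diverge is the treatment of the tail. The paper simply asserts that Outstanding Assumption (A) implies $\lim_{n \to \infty}\sup_{\pi}\sum_{t \geq n}\gamma^t \E\big[|\widehat r(\mu_t, \pi_t(\mu_t))|\big] = 0$; you observe, correctly, that (A) only gives finiteness of the full discounted sum policy-by-policy and initial-distribution-by-initial-distribution, so this uniform-in-$\pi$ vanishing of the tails is not a formal consequence of (A), and you close the gap only under bounded rewards (or compactness arguments making $\widehat r$ bounded), where you also note the contraction shortcut already used in the proof of Theorem \ref{theoremverification}(2). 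Your one-sided bound $\liminf_{n}(T^n v_0)(\mu) \geq v(\mu)$, which needs no uniform tail control at all, is a nice refinement over the paper's symmetric estimate. The caveat is that, as written, your proof establishes the proposition only under the additional boundedness hypotheses rather than under Assumption (A) alone as the statement claims; the paper avoids this by asserting the uniform tail estimate directly from (A) without further justification, so your caution points at a real subtlety in that step rather than at a defect of your own construction.
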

Proof of Proposition \ref{prop: valueiteration} relies on the following Lemma.

\begin{Lemma} \label{lemma: value_iteration} Assume Outstanding Assumption (A) and without loss of generality $v_0$ $=$ $0$, for any $\mu$ $\in$ $\Pc(\Sc)$ and ${\pi}$ $=$ $\{\pi_t\}_{t=0}^n$ with $\pi_t: \Pc(\Sc) \to \Hc$ for every  $0$ $\leq$ $t$ $\leq$ $n$,
\beq  \label{equ: Tpinv0}
( T_{\pi_{0}}\cdots T_{\pi_n} v_0)(\mu) &=& \sum_{t=0}^n \gamma^t \widehat r(\mu_t, \pi_t(\mu_t)),\\
(T^{n+1} v_0)(\mu) &=& \sup_{\{\pi_t\}_{t=0}^n} (T_{\pi_0} \cdots T_{\pi_n} v_0)(\mu),
\label{equ: TTpirelation}
\enq
where $T_{\pi_0} \cdots T_{\pi_n}$ is the composition of all $T_{\pi_t}$, $0 \leq t \leq n$.
\end{Lemma}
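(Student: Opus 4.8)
The plan is to prove both identities by induction on $n$, with the running statement quantified over all $\mu \in \Pc(\Sc)$. The one structural fact I will lean on is Lemma \ref{lem: flowmut}: once the feedback sequence $\{\pi_t\}$ is fixed, the trajectory $\mu_0 = \mu$, $\mu_{t+1} = \Phi(\mu_t, \pi_t(\mu_t))$ is a deterministic flow in $\Pc(\Sc)$, so the composed operators $T_{\pi_0}\cdots T_{\pi_n}$ act by transporting the initial measure along this flow. I will also use that $T_\pi$ and $T$ genuinely map $\Mc(\Pc(\Sc))$ into itself, which follows from the measurability of $\widehat r$ and $\Phi$ recorded in \reff{hatr} and Lemma \ref{lem: flowmut}.

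For \reff{equ: Tpinv0}, the base case $n=0$ is immediate since $v_0 \equiv 0$: $(T_{\pi_0}v_0)(\mu) = \widehat r(\mu, \pi_0(\mu)) = \widehat r(\mu_0, \pi_0(\mu_0))$. For the inductive step I peel off the outermost operator, $(T_{\pi_0}T_{\pi_1}\cdots T_{\pi_n}v_0)(\mu) = \widehat r(\mu, \pi_0(\mu)) + \gamma (T_{\pi_1}\cdots T_{\pi_n}v_0)(\mu_1)$ with $\mu_1 := \Phi(\mu, \pi_0(\mu))$, observe that the flow generated by the shifted sequence $\tilde\pi_t := \pi_{t+1}$ started at $\mu_1$ is exactly $\{\mu_{t+1}\}_t$, and invoke the inductive hypothesis on $\{\tilde\pi_t\}_{t=0}^{n-1}$ to get $(T_{\pi_1}\cdots T_{\pi_n}v_0)(\mu_1) = \sum_{t=1}^{n}\gamma^{t-1}\widehat r(\mu_t, \pi_t(\mu_t))$; multiplying by $\gamma$ and adding $\widehat r(\mu_0,\pi_0(\mu_0))$ closes the induction.

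For \reff{equ: TTpirelation} I again induct on $n$. The base case $n=0$ reduces, since $v_0 \equiv 0$, to $\sup_{h \in \Hc}\widehat r(\mu, h) = \sup_{\pi_0:\Pc(\Sc)\to\Hc}\widehat r(\mu, \pi_0(\mu))$, which holds because $\pi_0$ may be taken constant equal to any prescribed $h \in \Hc$. For the inductive step, unfolding one layer of $T$ gives $(T^{n+1}v_0)(\mu) = \sup_{h\in\Hc}\{\widehat r(\mu, h) + \gamma (T^n v_0)(\Phi(\mu, h))\}$, into which I substitute the inductive hypothesis $(T^n v_0)(\cdot) = \sup_{\{\pi'_t\}_{t=0}^{n-1}}(T_{\pi'_0}\cdots T_{\pi'_{n-1}}v_0)(\cdot)$ evaluated at $\Phi(\mu,h)$. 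The inequality ``$\geq$'' is direct: for any $\{\pi_0,\dots,\pi_n\}$, the definition of $T_{\pi_0}$ gives $(T_{\pi_0}\cdots T_{\pi_n}v_0)(\mu) = \widehat r(\mu,\pi_0(\mu)) + \gamma(T_{\pi_1}\cdots T_{\pi_n}v_0)(\Phi(\mu,\pi_0(\mu))) \le \widehat r(\mu,\pi_0(\mu)) + \gamma (T^n v_0)(\Phi(\mu,\pi_0(\mu))) \le (T^{n+1}v_0)(\mu)$, and taking the supremum over $\{\pi_t\}$ proves one half.

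The main obstacle — a mild one — is the reverse inequality in \reff{equ: TTpirelation}, where I must concatenate an $\epsilon$-optimal first move with an $\epsilon$-optimal continuation without a measurable-selection argument. Given $\epsilon>0$, I choose $h^*\in\Hc$ with $\widehat r(\mu,h^*) + \gamma(T^n v_0)(\Phi(\mu,h^*)) \ge (T^{n+1}v_0)(\mu) - \epsilon/2$; since $\Phi(\mu,h^*)$ is then a single fixed point of $\Pc(\Sc)$, the inductive hypothesis supplies a tail $\{\pi'_t\}_{t=0}^{n-1}$ with $(T_{\pi'_0}\cdots T_{\pi'_{n-1}}v_0)(\Phi(\mu,h^*)) \ge (T^n v_0)(\Phi(\mu,h^*)) - \epsilon/(2\gamma)$. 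Setting $\pi_0 \equiv h^*$ (constant, hence admissible) and $\pi_{t+1} := \pi'_t$, the same peeling computation as in the proof of \reff{equ: Tpinv0} shows $(T_{\pi_0}\cdots T_{\pi_n}v_0)(\mu) \ge (T^{n+1}v_0)(\mu) - \epsilon$, and letting $\epsilon \downarrow 0$ yields the remaining inequality. This works precisely because the claim is pointwise in $\mu$: once $h^*$ is fixed the continuation problem lives at the single point $\Phi(\mu,h^*)$, so a constant first-stage feedback suffices and admissibility of the spliced sequence is automatic.
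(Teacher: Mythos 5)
Your argument is correct and follows essentially the same route as the paper: forward induction on $n$, peeling off one layer of $T$ (resp.\ $T_{\pi_0}$), substituting the inductive hypothesis pointwise at $\Phi(\mu,h)$, and using that for a fixed $\mu$ a constant feedback $\pi_0\equiv h$ is admissible so the supremum over $h\in\Hc$ matches the supremum over first-stage policies. The only difference is presentational: you spell out the interchange of suprema via a two-sided inequality with an explicit $\epsilon$-optimal splicing (and you write out the induction for \reff{equ: Tpinv0}, which the paper dismisses as ``similarly''), whereas the paper compresses the same content into a chain of equalities.
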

\begin{proof}{Proof of Lemma \ref{lemma: value_iteration}.}
We prove  \reff{equ: TTpirelation} (and similarly \reff{equ: Tpinv0}) by the forward induction. 
The result clearly holds for $n$ $=$ $0$ as
\beqs
\sup_{\pi_0}(T_{\pi_0}v_0)(\mu) = \sup_{\pi_0} \widehat r(\mu, \pi_0(\mu)) = \sup_{h \in \Hc}\E[\widehat r(\mu, h)] = (T v_0)(\mu).
\enqs
Suppose that \reff{equ: TTpirelation} holds for $n$ $=$ $k$, then for $n$ $=$ $k + 1$,
\beq\label{TTpikrelation1k}
(T^{k + 1} v_0)(\mu) &=& \sup_{h \in \Hc} \big\lbrace\widehat r(\mu, h) + \gamma (T^k v_0)(\Phi(\mu, h))\big\rbrace \nonumber\\
&=&  \sup_{h \in \Hc}\big\lbrace\widehat r(\mu, h) + \gamma\sup_{\{\tilde\pi_t\}_{t=0}^{k-1}} (T_{\tilde\pi_0} \cdots T_{\tilde \pi_{k-1}} v_0)(\Phi(\mu, h))\big\rbrace \nonumber\\
&=& \sup_{h \in \Hc} \big\lbrace\widehat r(\mu, h) + \gamma \sup_{\{\pi_{t}\}_{t=1}^{k}} (T_{\pi_1} \cdots T_{\pi_{k}} v_0)(\Phi(\mu, h))\big\rbrace \nonumber\\
&=& \sup_{h \in \Hc,\{\pi_{t}\}_{t=1}^{k }} \big\lbrace\widehat r(\mu, h) + \gamma (T_{\pi_1}\cdots T_{\pi_{k}} v_0)(\Phi(\mu, h))\big\rbrace \nonumber\\
&=& \sup_{\{\pi_t\}_{t =0}^{k + 1}} (T_{\pi_0}\cdots T_{\pi_{k}} v_0)(\mu), \nonumber
\enq
where the first equality is from the definition of $T$ in \reff{Tv}; the second equality is by the assumption that \reff{equ: TTpirelation} holds for $n$ $=$ $k$.
\end{proof}

\medskip

\noindent  \begin{proof}{Proof of Proposition \ref{prop: valueiteration}.} Rewrite $v^{\pi}(\mu)$ as
\beq \label{equ: vpi2part}
v^{\pi}(\mu) &=& \sum_{t=0}^{n-1} \gamma^t \widehat r(\mu_t, \pi_t(\mu_t)) + \sum_{t= n}^\infty \gamma^t \widehat r(\mu_t, \pi_t(\mu_t)) \nonumber\\
&=& (T_{\pi_0} \cdots T_{\pi_{n-1}} v_0)(\mu) + \sum_{t=n}^\infty \gamma^t \widehat r(\mu_t, \pi_t(\mu_t)),
\enq
where the second equality is by \reff{equ: Tpinv0}. Now Outstanding Assumption (A) implies $$\lim_{n \to \infty}\sup_{{\pi}}\sum_{t=n}^\infty \gamma^t |\widehat r(\mu_t, \pi_t(\mu_t))|=0.$$
Taking supremum  over ${\pi}$ $\in$ $\Pi$ in \reff{equ: vpi2part} together with \reff{equ: TTpirelation} gives
\beqs \label{equ: v2part<}
v(\mu) &\leq& (T^{n } v_0)(\mu) + \sup_{{\pi}} \sum_{t=n}^\infty \gamma^t|\widehat r(\mu_t, \pi_t(\mu_t))|,\\
v(\mu) & \geq & (T^{n } v_0)(\mu) -  \sup_{{\pi}} \sum_{t=n}^\infty \gamma^t |\widehat r(\mu_t, \pi_t(\mu_t))|.
\label{equ: v2part>}
\enqs
Taking the limit as $n$ $\to$ $\infty$ together with \reff{equ: TTpirelation} yields $v(\mu) =\lim_{n \to \infty} (T^n v_0)(\mu)$.
\end{proof}

%%%%%%%%%%%%%%%%%%%%%%%%%%%%%%%%%%%%%%%%%%%%%%%%%%%%%%%%%%%%%%%%

\section{Example \ref{wrong_Q} revisited} \label{sec:example_revisit}

\begin{Example}
Take a two-state dynamic system with two choices of  actions. The state space  {$\Sc$ $=$ $\{L, H\}$} and the action space {$\A= \{{\rm ST}, {\rm MV}\}$}. The transition probability goes as follows: 
\beqs 
P(s, a)(s') = \lambda_s {{\bf{1}}_{\{a = {\rm MV}\}}}, \;\; \text{if}\; s' \neq s \in \Sc, \; P(s, a)(s') = 1 - \lambda_s {{\bf{1}}_{\{a = {\rm MV}\}}}, \;\;\text{if}\; s' = s \in \Sc
\enqs
{with $\lambda_{s}\in[0,1]$ for $s\in \mathcal{S}$.}
Here $P(s, a)(s')$ is the probability of moving to state $s'$ when the agent in state $s$ takes the action $a$; 
 when the agent in the state $s$ takes action {{\rm ST}}, she will {stay} at the current state $s$; when the agent in the state $s$ takes the action {{\rm MV}}, she will {move} to a different state $s'$ with probability $0\leq \lambda_s \leq 1$, $s \in \Sc$ and stay at state $s$ with probability $1 - \lambda_s$, $s \in \Sc$. After each action, the representative agent will receive a reward  $r_t=$ ${\bf 1}_{\{s_t = H\}} - \big(\E[{\bf 1}_{\{s_t = H\}}]\big)^2 - \lambda \Wc_1(\mu_t, B)$.
 Here $\mu_t$ denotes the probability distribution of the state at time $t$, $B$ is a given Binomial distribution with parameter $p$ ($1 - \lambda_L \le p\le \lambda_H$), and $\lambda > 0$ is a scalar parameter.
Fix any arbitrary initial state distribution $\mu_0$ $=$ $p_0 1_{\{s_0=L\}} + (1-p_0) 1_{\{s_0 =H\}}$ for some $0 \leq p_0 \leq 1$.\\
Note that the expected value of immediate reward $\E[r_t]$ at each time $t$ is 
\beqs
\E[r_t] = \E[{\bf 1}_{\{s_t = H\}}] - \E[{\bf 1}_{\{s_t = H\}}]^2 - \lambda \Wc_1(\mu_t, B) = \mu_t(H) - \mu_t(H)^2 - 2\lambda |\mu_t(H) - (1 - p)|,
\enqs 
where $\mu_t(L)$ and $\mu_t(H)$ are the population distribution on state $L$ and $H$ at time $t$, respectively.
Suppose that $\lambda > 0$ is large enough, we have $\max_{\pi}\big(\E[{\bf 1}_{\{s_t=H\}}] - \E[{\bf 1}_{\{s_t=H\}}]^2 - \lambda \Wc_1(\mu_t, B)\big) = 1- p - (1-p)^2$ when $\mu_t = B$ for any $t \in \N$. Therefore, the value function is optimal if and only if the population distribution $\{\mu_t^*\}_{t=1}^\infty$ corresponding to the optimal control $\pi^*$ is given by
\beqs
\mu_t^* \;=\; B = p {\bf 1}_{\{s=L\}}+ (1-p) {\bf 1}_{\{s=H\}}, \;\; t \in \N, \;\;\;\;\mu_0 \;=\; p_0 {\bf 1}_{\{s=L\}} + (1- p_0) {\bf 1}_{\{s=H\}}.
\enqs
From the flow property of $\{\mu_t^*\}_{t=1}^\infty$ in \reff{eqv:flowmut} and \reff{equ: Phi}, we get
\beqs
\mu_{1}^*(L) = p \;=\; \Phi(\mu_0, \pi^*)(L)  &=& \sum_{s \in \Sc}\mu_0(s)\sum_{a \in \A}  P(s, a)(L) \pi^*(s)(a),\\
\mu_{t + 1}^*(L) = p \;=\; \Phi(\mu_t^*, \pi^*)(L) &=& \sum_{s \in \Sc}\mu_t^*({s}) \sum_{a \in \A} P(s, a)(L)\pi^*(s)(a),\; t \in \N,
\enqs
which gives the optimal control and the optimal value
\begin{eqnarray}\label{stationary_policy}
& \pi^*(L) \;=\; (1- \frac{1-p}{\lambda_L}) {\bf 1}_{\{a= {\rm ST}\}} + \frac{1-p}{\lambda_L}{\bf 1}_{\{a = {\rm MV}\}},\;\pi^*(H) \;=\; (1-\frac{p}{\lambda_H}) {\bf 1}_{\{a = {\rm ST}\}} + \frac{p}{\lambda_H} {\bf 1}_{\{a = {
\rm MV}\}}, \\
&v^{\pi^*}({\mu_0}) = 1- p_0 - (1 - p_0)^2 -2\lambda |p_0 - p| + \frac{\gamma}{1- \gamma} \Big(1 - p - (1 - p)^2\Big). \label{eqn: optimal_value}
\end{eqnarray} 
%Note that the optimal control $\pi^* \in \Hc$ does not depend on $\mu$.
Now, the Q-learning update at each iteration
$t$ using the {IQ} function is
\begin{eqnarray} \label{eqn:IQ_update}
Q_{t + 1}(\mu, h) &=& Q_t(\mu, h) + {l_t} \times \left(\hat{r}_t + \gamma  \sup_{h' \in \Hc}Q_t(\Phi(\mu, h), h') - Q_t(\mu, h) \right).
\end{eqnarray}
Here {$l_t$} is the learning rate {at iteration $t$} and  $\gamma$ is  the discount factor. 
\end{Example}
 This example is  further studied by \cite{MP2019} later on.

Next, we design a simple algorithm (Algorithm~\ref{MFCs_Q}) to show the performance of the {IQ} update \eqref{eqn:IQ_update}, with the following specifications.
We emphasize that the focus here is  the time consistency property not the efficiency of the algorithm. In the experiment, we shall  use  element $(p, 1 - p)$ in the Euclidean space $\R^2$ to denote the Binomial distribution with parameter $p$.

\begin{itemize}
    \item[(a)] {\bf Dimension reduction}: Since $\mu_t(L)+\mu_t(H)=1$  $(t=0,1,\cdots,T)$ , $\pi(L, {\rm ST})+\pi(L, {\rm ST})=1$ and $\pi(H, {\rm ST})+\pi(H, {\rm MV})=1$ for any distribution $\mu_t$ and control $\pi$, we can reduce the dimension of the {IQ} function. If we define $Q(\mu^{L},\pi_{{\rm ST}}^{L},\pi_{{\rm ST}}^{H})$ {and $\Phi(\mu^L,\pi_{{\rm ST}}^L,\pi_{{\rm ST}}^H)$}, with $\mu^L: =\mu(L)$  the probability of population state $L$, $\pi_{{\rm ST}}^{L}: = \pi(L, {\rm ST})$  the probability of the action to ``stay'' at state $L$, and $\pi_{{\rm ST}}^{H}: = \pi(H, {\rm ST})$  the probability of the action to ``stay'' at state $H$, then  $ Q(\mu, \pi)=Q(\mu^L,\pi_{{\rm ST}}^L,\pi_{{\rm ST}}^H)$, $\Phi(\mu, \pi) = \Phi(\mu^L,\pi_{{\rm ST}}^L,\pi_{{\rm ST}}^H)$ with a slight abuse of notation.
    \item[(b)] {\bf Distribution discretization}:  To examine the time-consistency property of \eqref{equ:BellmanQ}  we discretize the state and action distribution with finite precision and apply the classical Q-learning update to \eqref{eqn:IQ_update_reduction}  with finite-dimensional inputs. For simplicity, we assume uniform discretization such that $\tilde{\Pc}(\A):=\{i/N_a: 0\leq i \leq N_a\}$ and $\tilde{\Pc}(\Sc):=\{i/N_s: 0\leq i \leq N_s\}$ for some constant integers $N_a>0$ and $N_s>0$. (For more refined discretization other than the uniform one, see for example the  $\epsilon$-Net approach  in \cite{GHXZ2019}). 

       \item[(c)] {\bf Algorithmic design}:
  The algorithm is summarized in Algorithm~\ref{MFCs_Q}. Note that \eqref{eqn:IQ_update_reduction} is the reduced form of the original update \eqref{eqn:IQ_update} with a discretized distribution. In order to perform the for-loop (Step $3$, $4$, and $5$) in Algorithm \ref{MFCs_Q}, we assume the accessibility to a {population simulator} $(\mu^{\prime}, \widehat{r}) = \mathcal{G}(\mu, \pi)$. That is, for any pair $(\mu,\pi) \in \Pc(\Sc) \times \Hc$, we can sample the aggregated population reward $\widehat r $ and the next population state distribution $\mu'$ under control $\pi$.
    \item[(d)] {\bf Metric design}: Explicit calculations show that the stationary optimal control is given by \eqref{stationary_policy}. Therefore, we design the following metric to check the convergence of the Q table to the true value $v^{\pi^*}(\mu_0)$ in \reff{eqn: optimal_value} and the speed of the convergence.
    \[
    E(t)=\frac{1}{N_s}\sum_{i=0}^{N_s} \left|Q_{t}\left(\frac{i}{N_s},\mbox{Proj}(\pi^{L, *}_{\rm ST}, \tilde\Pc(\A)), \mbox{Proj}(\pi^{H, *}_{\rm ST}, \tilde\Pc(\A))\right) - v^{\pi^*}\left(\frac{i}{N_s}, 1 - \frac{i}{N_s}\right)\right|.
    \]
    Here for simplicity we take $N_s=N_a$; $\pi^{s, *}_{\rm ST}$, $s \in \Sc$, is the optimal control $\pi^*$ in \reff{stationary_policy} evaluated in state $s$ and action ST; the projection is defined as $\mbox{Proj}(\pi^{s, *}_{\rm ST}, \tilde \Pc(\A)): = \arg\min_{\tilde\pi^s_{\rm ST} \in \tilde\Pc(\A)}\left|\pi^{s, *}_{\rm ST} - \tilde\pi^s_{\rm ST}\right|$.
    \item[(e)] {\bf Parameter set-up}: Parameters are set as follows: $T=20$, $p=0.6$, {a constant learning rate $l_t=l=0.4$ for all $t$}, $\gamma=0.5$, $\lambda_L = 0.5$, $\lambda_R = 0.8$, $\lambda = 10$ and $N_a=N_s=20$. Each component in $Q_0$ is randomly initialized from a uniform distribution on $[0,1]$. The experiments are repeated $20$ times.
    \item[(f)]{\bf Performance analysis.} The experiments show that metric $E(t)$ converges in around $15$ outer iterations (Figure~\ref{fig:errors}). The standard deviation of $20$ repeated experiments is very small. This is partially due to lifting of the state-action space which leads to the deterministic property of the underlying system.

\end{itemize}

Recall $\tilde \Pc(\Sc) =\{{i}/{N_s}: 0\leq i \leq N_s\}$.
Further denote the projection as 
\begin{eqnarray}\label{eq:projection}
\mbox{Proj}({\Phi}(\mu^L,\pi^L_{\rm ST}, \pi_{\rm ST}^H)(L),\tilde \Pc(\Sc)):=\arg\Min_{\tilde \mu^L \in\tilde P(\Sc)}|{\Phi}(\mu^L, \pi_{\rm ST}^L, \pi_{\rm ST}^H)(L)-\tilde{\mu}^L|.
\end{eqnarray}

Then the algorithm is summarized as follows.

\begin{algorithm}[H]
  \caption{\textbf{MFCs Q-learning with distribution discretization}}
  \label{MFCs_Q}
\begin{algorithmic}[1]
  \STATE \textbf{Input}: $N_a$ and $N_s$.
 \STATE {\textbf{Initialization}: $Q_0(\mu^L, \pi_{\rm ST}^L, \pi_{\rm ST}^H) =0$ for every $(\mu^L, \pi_{\rm ST}^L, \pi_{\rm ST}^H) \in \tilde \Pc(\Sc) \times(\tilde \Pc(\A))^2$}
%   \STATE initialize $s_1\overset{\text{i.i.d.}}{\sim} \rho$.
 \FOR {$t= 0,1, \cdots,T-1$}
  \FOR {$\pi_{\rm ST}^L\in\{\frac{i}{N_a}, 0\leq i \leq N_a\}$}
  \FOR {$\pi_{\rm ST}^H\in\{\frac{i}{N_a}, 0\leq i \leq N_a\}$}
   \FOR {$\mu^L\in\{\frac{i}{N_s}, 0\leq i \leq N_s\}$}
 \STATE {$\mu^{L\prime} = \mbox{Proj}({\Phi}(\mu^L,\pi_{\rm ST}^L, \pi_{\rm ST}^H)(L),\tilde \Pc(\Sc))$}
 \STATE  {\begin{equation} \label{eqn:IQ_update_reduction}
Q_{t + 1}(\mu^L, \pi_{\rm ST}^L,\pi_{\rm ST}^H)=(1 - l_t) Q_t(\mu^L, \pi_{\rm ST}^L,\pi_{\rm ST}^H) +  l_t \times \Big(\widehat{r}_t + \gamma  \max_{(\pi^{L\prime}_{\rm ST},\pi^{H\prime}_{\rm ST})\in (\tilde{\mathcal{P}}(\A))^2}Q_t(\mu^{L\prime}, \pi^{L\prime}_{\rm ST},\pi^{H\prime}_{\rm ST})\Big),
\end{equation}}

  \ENDFOR
  \ENDFOR
\ENDFOR
\ENDFOR
\end{algorithmic}
\end{algorithm}

\begin{Remark}
In general, distribution discretization is {\it sample inefficient} and suffers from the {\it curse of dimensionality}. For example, in Example \ref{wrong_Q}, there are  two states and two actions, with $N_s=N_a=20$  with precision $0.05$. The Q function is a table of dimension $8000$. This complexity grows exponentially with the number of states and actions. Moreover, although $E(t)$ converges relatively fast, there is unavoidable  errors due to truncation, as seen in Figure~\ref{fig:truncation_error}. The optimal value $Q_{t}\left(\frac{i}{N_s},\mbox{Proj}(\pi^{L, *}_{\rm ST}, \tilde\Pc(\A)), \mbox{Proj}(\pi^{H, *}_{\rm ST}, \tilde\Pc(\A))\right)$ can not be distinguished from its surrounding areas, where the areas with the lightest color all correspond to the largest value. This is because the accuracy is only up to $0.05$ in each iteration. Therefore, it is desirable to develop  sample-efficient and accurate Q-learning algorithms  for MFCs with learning with the correct Bellman equation \eqref{equ:BellmanQ}. See \cite{GGWX2020} for such a development with kernel regression method applied to improve the sample efficiency.
\end{Remark}

\begin{figure}[H]
  \centering
  \includegraphics[width=0.7\linewidth]{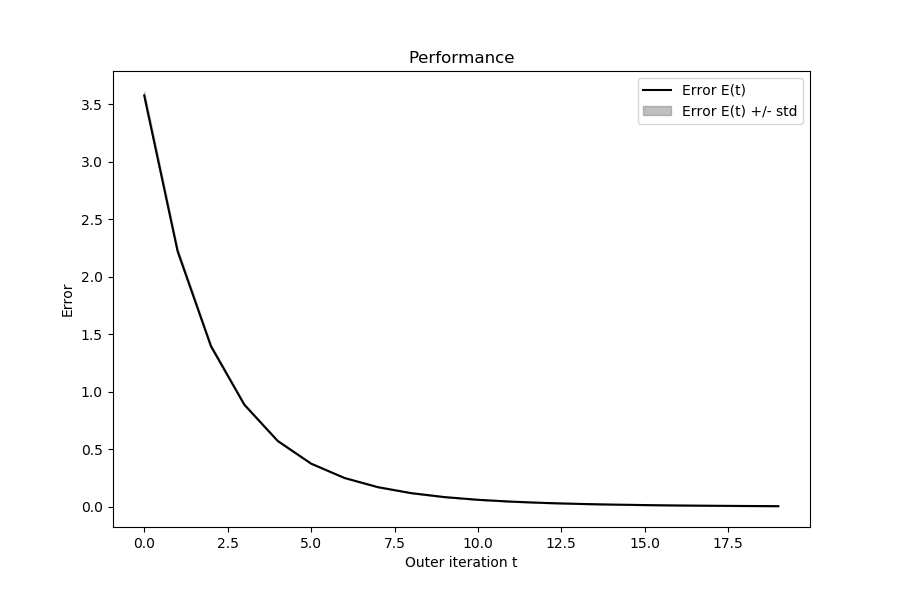}
  \caption{ \label{fig:errors}\centering{Numerical Performance on IQ Iterations.}}
\end{figure}

\begin{figure}[H]
\centering
\begin{subfigure}{.32\textwidth}
  \centering
  \includegraphics[width=0.99\linewidth]{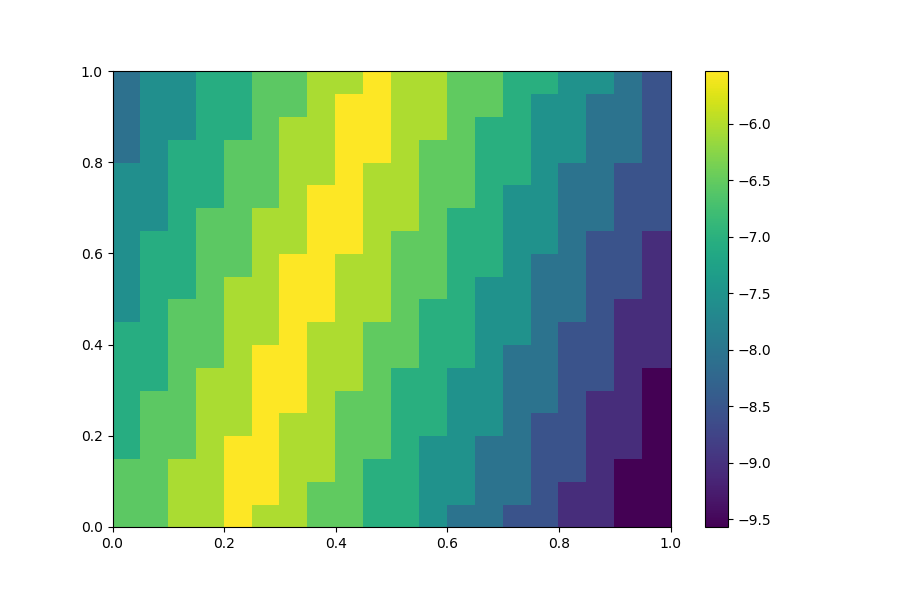}
  \centering
  \caption{$Q_T(0.3,\cdot,\cdot)$}
\end{subfigure}
\begin{subfigure}{.32\textwidth}
  \centering
  \includegraphics[width=0.99\linewidth]{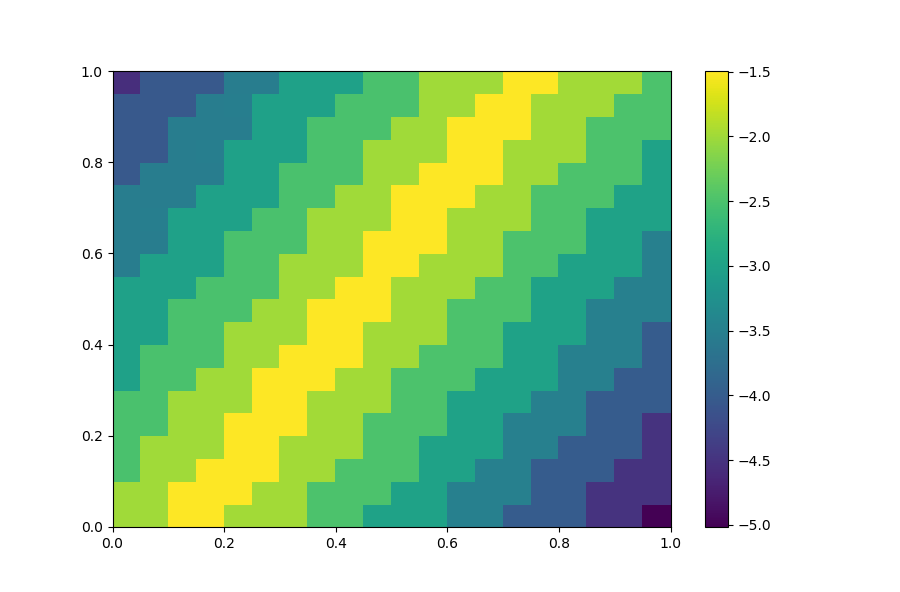}
  \caption{$Q_T(0.5,\cdot,\cdot)$.}
\end{subfigure}
\begin{subfigure}{.32\textwidth}
  \centering
  \includegraphics[width=0.99\linewidth]{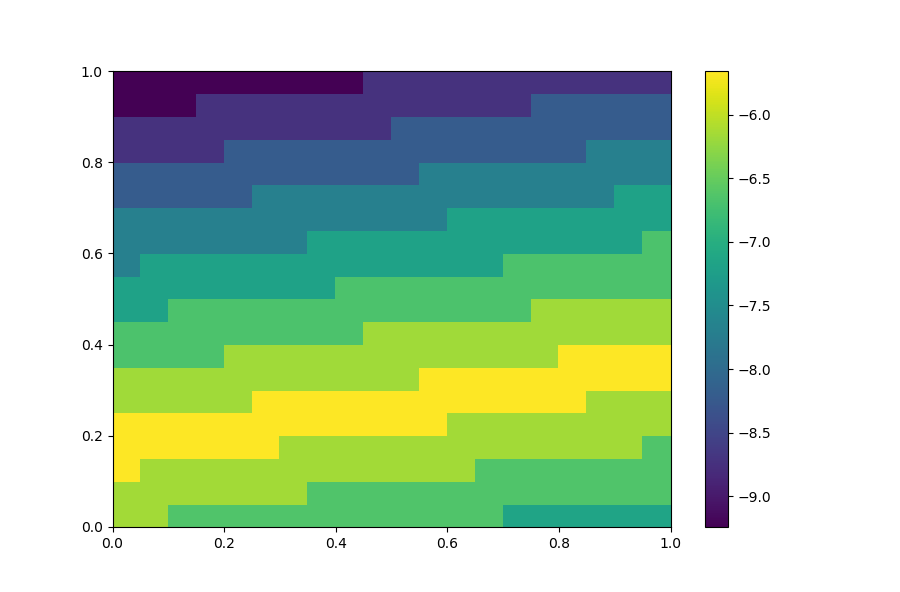}
  \caption{$Q_T(0.9,\cdot,\cdot)$}
\end{subfigure}
\caption{\label{fig:truncation_error}\centering{Snapshots of the IQ tables at final iteration $T$.}}
\end{figure}

{\begin{Remark}[Comparison with the time-inconsistent results in Section \ref{sec:wrong_Q}] The algorithm in Example \ref{wrong_Q} is designed based on the classical single-agent DPP with the usual state and action spaces $\mathcal{S}$ and $\mathcal{A}$. This approach fails both  theoretically and empirically in the mean-field regime. From a thoery point,  the classical  single-agent DPP is not ``rich enough'' to include all the necessary information. From an empirical perspective, the epsilon-greedy method and the time-dependent learning rate enables visiting each $(s,a)$ pair sufficiently many times, yet without the convergence guarantee.

In contrast, Algorithm \ref{MFCs_Q} finds the value of $Q(\mu,h)$ for any possible initial distribution $\mu$ including $\mu_0$ used in Example \ref{wrong_Q}. In addition, the convergence of the entire $Q$ table in Figure \ref{fig:errors} implies the convergence of $Q(\mu_0,\cdot)$ by the definition of $E(t)$.
\end{Remark}}     

\section{Additional example}
Consider a continuum of firms that supply a homogeneous product. For a representative firm, the sell price follows
\begin{eqnarray}\label{eq:price_dynamics}
s_t = s_{t-1} + \kappa(d_t-\mathbb{E}[a_t]) + w_t,
\end{eqnarray}
where $d_t$ is the (normalized) exogenous demand process per individual, $a_t$ is stochastic representing the supply volume from the representative firm,  $\{w_t\}_{t=1}^{\infty}$ are IID  noise following some distribution such as  the symmetric random walk, $\mathbb{E}[a_t]:=\int_{{\A}} a \nu_t(d\,a)$ is the aggregated supply volume from all firms where $\nu_t$ is the action distribution of all firms,
$\kappa>0$ is a scalar amplifying the impact from the supply-demand imbalance on the price process. Namely, the price process of the product will have a positive drift when demand is bigger than the supply whereas the price process will experience a negative drift if the average supply exceeds the demand. Correspondingly, the per-period reward accruing to the representative firm with supply volume $a_t$ is $$r_t = (s_t-c)a_t,$$
with $c>0$ the production cost.

\paragraph{Model set-up.} Assume $d_t\sim \mathcal{N}(2,0.25)$, $c=1$, $a_t \in \A: = \{0,1,2,\cdots,4\}$, and $\kappa=1$.  To enable Q-learning based algorithms, we truncate the values of the price dynamics within $s_t \in \mathcal{S}:=\{0,1,2,\cdots,19\}$.   Set the discount rate as $\gamma=0.6$ in the objective function. Finally, we consider $\{w_t\}_{t=1}^{\infty}$ follow IID random walks with probability $1/2$ being $1$ and  with probability $1/2$ being $-1$.

\paragraph{Design of the IQ table and the algorithm.} Recall that $s_t$ defined in \eqref{eq:price_dynamics} is the selling price received by the representative agent who produce $a_t$ amount of products during period $t$. Given the sets of actions and states specified above and from a population perspective, $\mu_t(i)$ denotes the proportion of firms who  received price $i-1$   and $\nu_t(i)$ denotes the proportion of firms taking action $i$ (i.e., supply with amount $i-1$) at time $t$.  
% Note that $s_t$ (defined in \eqref{eq:price_dynamics}) is a global state shared by all agents, hence we only need to consider the actual value of the price dynamics instead of the distribution of  $s_t$. 
In addition, denote  $\hat{\mathcal{P}} (\mathcal{S}):=\big\{\mu\in \mathcal{P}(\mathcal{S})\,\, \mbox{ such that }\,\, \mu(s) \in \{j/N_s; j=0,1,\cdot,N_s \} \mbox{ for } s\in \mathcal{S}\big\}$ and  $\hat{\mathcal{H}}:=\big\{h\in \mathcal{H} \mbox{ such that } \,\, h(s;a) \in \{j/N_a; j=0,1,\cdot,N_a \}, \forall a\in \mathcal{A} \mbox{ and } s \in \mathcal{S}\big\}$ as the discretized probability measure spaces for the states and local policies, respectively. Therefore, it  is enough to consider the IQ table with the format of $Q(\mu,h)$ such that $\mu \in \hat{\mathcal{P}}(\mathcal{S})$ and $h\in  \hat{H}$. Recall the projection defined in \eqref{eq:projection},
$$\mbox{Proj}(\Phi(\mu^0,h),\hat \Pc(\Sc)):=\arg\Min_{\hat \mu \in\hat{\mathcal{P}}(\Sc)}|\Phi(\mu^0, h)-\hat{\mu}|.$$
We use this projection function to maintain the feasibility of the state distribution throughout training.
%In addition, we set $N_a = 20$ and $l=0.1$ in Algorithm \ref{MFCs_Q_SG}.
See Algorithm \ref{MFCs_Q_SG} for the detailed design of the learning algorithm, where we set $T=100$, $N_a = 20$, $N_s = 20$ and {a constant learning rate $l_t=l=0.1$}.
\begin{algorithm}[H]
  \caption{\textbf{MFCs Q-learning for the Supply Game}}
  \label{MFCs_Q_SG}
\begin{algorithmic}[1]
  \STATE \textbf{Input}: $N_a$.
%   \STATE initialize $s_1\overset{\text{i.i.d.}}{\sim} \rho$.
 \FOR {$t= 1, \cdots,T$}
  \FOR {$h \in\hat{\mathcal{H}}$}
   \FOR {$\mu\in \hat{\mathcal{P}}(\mathcal{S})$}
 \STATE {$ \mu^{\prime}=\mbox{Proj}(\Phi^0(\mu,h),\hat{\mathcal{P}}(\mathcal{S}))$}
% \STATE { \small $Q_0(\mu^0, \pi_N^0, \pi_N^1) =0$ for every $(\mu^0, \pi_N^0, \pi_N^1) \in \tilde \Pc(\Sc) \times(\tilde \Pc(\A))^2$
 \begin{equation} \label{eqn:IQ_update_reduction2}
Q_{t + 1}(\mu,h)=(1 - l_t) Q_t(\mu,h) +  l_t \times \Big(\hat{r}_t + \gamma  \max_{h^{\prime}\in \hat{\mathcal{H}}}Q_t(\mu^{\prime},h^{\prime}))\Big),
\end{equation}
\ENDFOR
\ENDFOR
\ENDFOR
\end{algorithmic}
\end{algorithm}

\paragraph{Results.} The IQ table converges with error less than $0.01$ within 60 outer iterations (see Figure \ref{fig:errors2}).

\begin{figure}[H]
  \centering
  \includegraphics[width=0.5\linewidth]{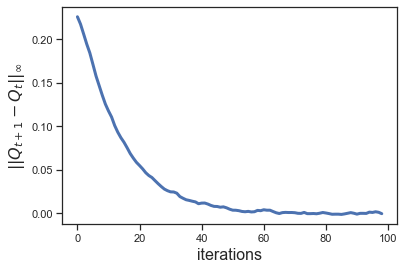}
  \caption{ \label{fig:errors2} \centering{ Convergence of the  IQ table.}}% {\centering Convergence of the  IQ table.}}
\end{figure}

\begin{table}[h!]
\centering
\begin{tabular}{||c| c c c c c c c c c c||} 
 \hline
state (price $s$)  & 0 & 1 & 2 &3 &4 & 5 &6&7&8&9 \\ [0.5ex] 
 \hline
{{MFC}} solution & 0.4 & 0.65 & 0.9 & 0.8 & 1.15 & 0.8 & 1.25 & 0.9 & 0.95 &1.4 \\ 
% MFG solution&2.08 & 2.19 & 2.37  & 2.37 & 2.51 &      2.59& 2.75 & 2.81 & 2.92 & 3.01\\
\hline\hline
 state  (price $s$)  & 10 & 11 & 12 & 13 & 14 & 15 & 16& 17& 18&19 \\  \hline
{{MFC}} solution & 1.8 &
       2.05 & 2.15 & 1.9 & 2.3 & 3.  & 2.85 & 2.35 & 3.15 & 3.1  \\
% MFG solution& 3.08 & 3.22 & 3.32 & 3.34 & 3.42 & 3.51 & 3.56 & 3.60 & 3.65 & 3.68 \\
\hline \hline
\end{tabular}
\caption{$\mathbb{E}[a^*(s)]$ in the MFC solution.}
\label{table:1}
\end{table}

Table \ref{table:1} shows the average supply from the learned MKV solution given different initial price.  When the price is small ($s_t=0$), it is optimal for the agents to provide a small amount of supplies to reduce the cost. %(i.e., bigge $\alpha_4$ values). This is because $p=0$ is the minimum value of the price and a positive demand-supply imbalance will no more have a impact on the price dynamics in this case. Hence it is optimal for the agents to supply more aggressively under such an extreme circumstance. 
When the price is in the middle range ($s_t=10$), it is optimal to suggest the allocation such that $\mathbb{E}[a_t]\approx 2$ with no impact on the price. When the price is high ($s_t=19$),  the price impact is tolerable by providing an excessive supply since it is highly profitable in this situation.

\iffalse
\begin{figure}[H]
\centering
\begin{subfigure}{.32\textwidth}
  \centering
  \includegraphics[width=0.95\linewidth]{}
  \centering
  \caption{$Q_T(0;\cdot,0,0,\cdot)$}
\end{subfigure}
\begin{subfigure}{.32\textwidth}
  \centering
  \includegraphics[width=0.95\linewidth]{}
  \caption{$Q_T(10;\cdot,0,0,\cdot)$.}
\end{subfigure}
\begin{subfigure}{.32\textwidth}
  \centering
  \includegraphics[width=0.95\linewidth]{}
  \caption{$Q_T(19;\cdot,0,0,\cdot)$.}
\end{subfigure}
\caption{\label{fig:truncation_error2}Snapshots of the IQ tables at final iteration $T$.}
\end{figure}
We also visualize the IQ table $Q_T$ with $\alpha_2=\alpha_3=0$ under three different values of the price ($p=0$, $p=10$ and $p=19$). 

When the price is small ($p=0$), it is optimal to suggest most of the agents to provide a small amount of supply as the price is not profitable. %(i.e., bigge $\alpha_4$ values). 
This is because $p=0$ is the minimum value of the price and a positive demand-supply imbalance will no more have a impact on the price dynamics in this case. Hence it is optimal for the agents to supply more aggressively under such an extreme circumstance. When the price is in the middle ($p=10$), it is optimal to suggest the allocation such that $\mathbb{E}[q_t]\approx 2$ so that no impact on the price. When the price is high ($p=19$), it is optimal to enjoy the benefit of high profit per unit of the product by making sure 
\fi

\paragraph{Comparison with Nash equilibrium} We also compare the performance of {MFC} solution under the Pareto optimality criterion with that of the mean-field game solution (MFG) under the Nash equilibrium criterion. The algorithm for learning the MFG solution is from \cite{GHXZ2019}. The output of the MFG strategy follows a Boltzmann type of policy  $\pi(s)(a)\sim \exp(\beta Q(s,a))$ with a temperature parameter $\beta>0$. Here we take $\beta=1$ and train the algorithm until the error falls below $10^{-2}$.

The trained Q table is provided in Figure \ref{nash_Q}, which indicates that in the equilibrium agents provide the largest supply (i.e., action 5) with a high probability. This is also consistent with the mean-field information  provided in Table \ref{table:2}. In the mean-field equilibrium, the expected supply is always bigger than $\mathbb{E}[d_t]=2$. This implies that, in a competitive market, agents are more aggressive in making and selling productions. 

In Figure \ref{reward_comparison}, we compare the cumulative rewards under the trained MFC policy with the trained MFG policy for 1000 rounds. We observe that the cumulative rewards from the MFC policy  is ten times bigger than those from the MFG policy. This implies that the aggressive behavior due to competition may leads to inefficiency from the market perspective, which indicates the necessity of understanding Pareto optimal solution for large-scale decision making problems.
\begin{figure}[H]
\centering
\begin{subfigure}{.45\textwidth}
  \centering
  \includegraphics[width=0.95\linewidth]{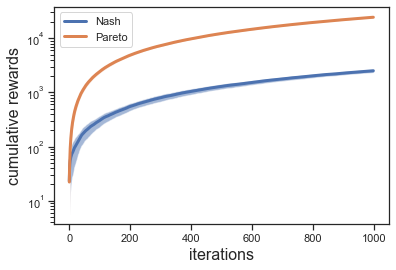}
  \centering
  \caption{\label{reward_comparison}Cumulative rewards for 1000 rounds.}
\end{subfigure}
\begin{subfigure}{.45\textwidth}
  \centering
  \includegraphics[width=0.95\linewidth]{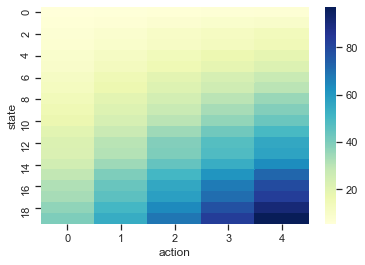}
  \caption{\label{nash_Q}Q table  of  the MFG solution.}
\end{subfigure}
\caption{\label{nash_vs_pareto}MFG solution vs. MFC solution.}
\end{figure}

\begin{table}[h!]
\centering
\begin{tabular}{||c c c c c c c c c c c||} 
 \hline
state (price $s$)  & 0 & 1 & 2 &3 &4 & 5 &6&7&8&9 \\ [0.5ex] 
 \hline
%MFC solution & 0.4 & 0.65 & 0.9 & 0.8 & 1.15 & 0.8 & 1.25 & 0.9 & 0.95 &1.4 \\ 
 MFG solution&2.08 & 2.19 & 2.37  & 2.37 & 2.51 &
       2.59& 2.75 & 2.81 & 2.92 & 3.01\\\hline\hline
 state  (price $s$)  & 10 & 11 & 12 & 13 & 14 & 15 & 16& 17& 18&19 \\  \hline
%MFC solution & 1.8 &      2.05 & 2.15 & 1.9 & 2.3 & 3.  & 2.85 & 2.35 & 3.15 & 3.1  \\
 MFG solution& 3.08 & 3.22 & 3.32 & 3.34 & 3.42 &
       3.51 & 3.56 & 3.60 & 3.65 & 3.68 \\\hline
 \hline
\end{tabular}
\caption{$\mathbb{E}[a^*(s)]$ in the MFG solution.}
\label{table:2}
\end{table}
\newpage

%%%%%%%%%%%%%%%%%%%%%%%%%%%%%%%%%%%%%%%%%%%%%%%%%%%%%%%%%%%%%%%%

%%%%%%%%%%%%%%%%%%%%%%%%%%%%%%%%%%%%%%%%%%%%%%%%%%%%%%%%%%%%%%%%
\bibliographystyle{informs2014}
\bibliography{refs}
\end{document}